\newtheorem{thm}{Theorem}[section]
\newtheorem{cor}[thm]{Corollary}
\newtheorem{corollary}[thm]{Corollary}
\newtheorem{lemma}[thm]{Lemma}
\newtheorem{prop}[thm]{Proposition}
\theoremstyle{definition}
\newtheorem{Examples}[thm]{Examples}
\newtheorem{Questions}[thm]{Questions}
\newtheorem{Open questions}[thm]{Open questions}
\newtheorem{Open question}[thm]{Open question}
\newtheorem{Open problems}[thm]{Open problems}
\newtheorem{Open problem}[thm]{Open problem}
\definecolor{dmagenta}{rgb}{.5,0,.5} 
\definecolor{dred}{rgb}{.5,0,0} 
\definecolor{dgreen}{rgb}{0,.5,0} 
\definecolor{dblue}{rgb}{0,0,0.5} 
\definecolor{black}{rgb}{0,0,0} 
\definecolor{vdgreen}{rgb}{0,.3,0} 
\definecolor{vdred}{rgb}{.3,0,0} 
\definecolor{red}{rgb}{1,0,0} 
\definecolor{gray}{rgb}{.5,.5,.5}
\def\Bbb{\mathbb}
\def\bar{\overline}
\def\F{\Bbb{F}}
\def\Z{\Bbb{Z}}
\def\Q{\Bbb{Q}}
\def\tr{\hbox{\rm tr}}
\def\det{\textup{det}}
\def\Stab{\hbox{\rm Stab}}
\def\GL{\hbox{\rm GL}}
\def\SL{\hbox{\rm SL}}
\def\PGL{\hbox{\rm PGL}}
\def\PSL{\hbox{\rm PSL}}
\def\PL{\hbox{\rm PL}}
\def\ssm{\smallsetminus}
\def\ms{\medskip}
\def\onto{{\kern3pt\to\kern-8pt\to\kern3pt}}
\def\<{\langle}
\def\>{\rangle}
\def\|{{\ |\ }}
\newcommand{\set}[1]{\left\{#1\right\}}
\newcommand{\abs}[1]{\left|#1\right|}
\renewcommand{\ms}{\medskip}
\newcommand{\bs}{\bigskip}
\newcommand{\ds}{\displaystyle}
\begin{document}

\title{Graham Higman's lectures on januarials}

\author{Marston Conder%
 \thanks{\texttt{m.conder@auckland.ac.nz}, \href{http://www.math.auckland.ac.nz/~conder/}{http://www.math.auckland.ac.nz/$\sim$conder/}}}
\affil{Department of Mathematics, University of Auckland,  \\ Private Bag 92019, Auckland 1142,  New Zealand}

\author{Timothy Riley%
\thanks{\texttt{tim.riley@math.cornell.edu}, \href{http://www.math.cornell.edu/~riley/}{http://www.math.cornell.edu/$\sim$riley/}
}}
\affil{Department of Mathematics, 310 Malott Hall, \\  Cornell University, Ithaca, NY 14853, USA}
 
\date{}

\maketitle

\begin{abstract} 
\noindent This is an account of a series of lectures of Graham Higman on \emph{januarials},  
namely coset graphs for actions of triangle groups which become \emph{2-face maps} 
when embedded in orientable surfaces.
\end{abstract}


 \begin{tabular}{lll}
\hspace*{2mm} \parbox{36mm}{\vspace*{-28mm}\footnotesize{{\bf Spilt Milk} \\ We
that have done and thought, \rule{0mm}{4mm} \\ That have thought and done,  \\  
Must ramble, and thin out \\ Like milk spilt on a stone.}} & 
\epsfig{file=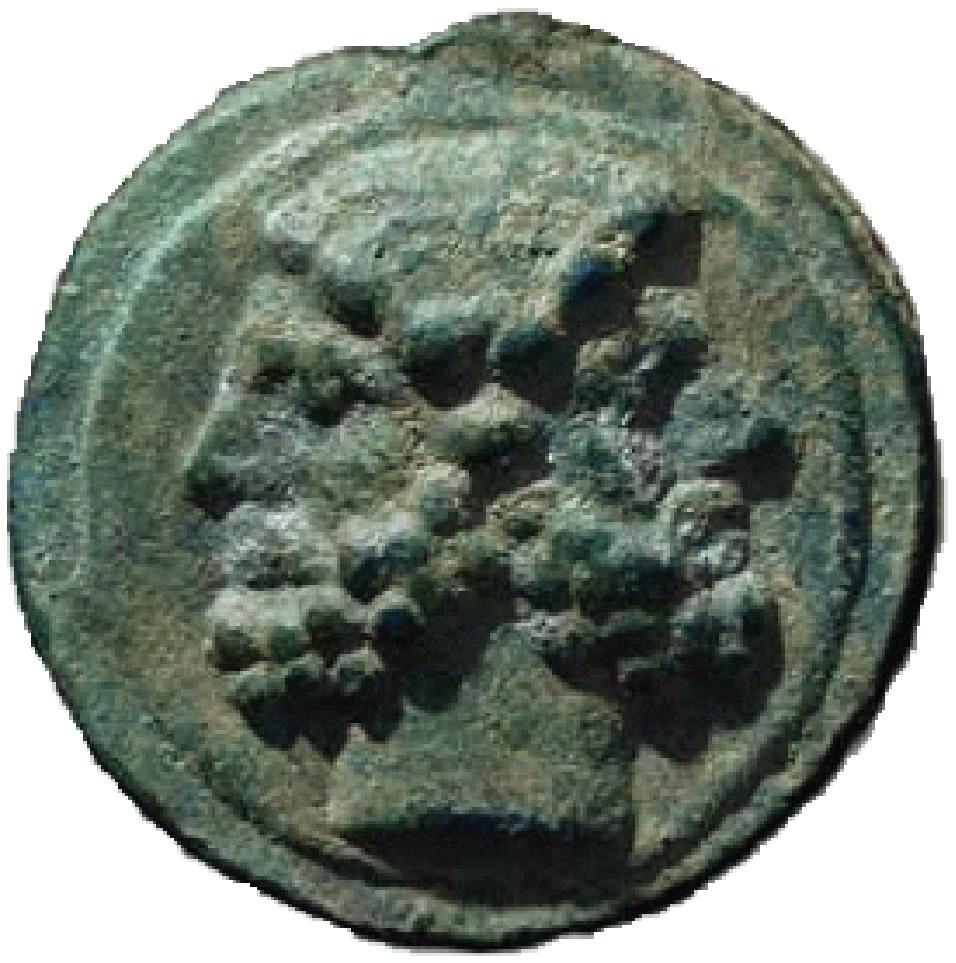,width=3cm}
\hspace{4mm} \parbox{6cm}{\vspace*{-30mm}\footnotesize{{\bf The
Nineteenth Century and After} \\ Though the great song return no more \rule{0mm}{4mm} \\ 
There's keen delight in what we have:   \\  The rattle of pebbles on the shore \\ 
Under the receding wave. }}
\end{tabular} 
\vspace*{-4mm}
\begin{flushright}
\footnotesize{from \emph{The Winding Stair and Other Poems}, W.B.Yeats, 1933}
\end{flushright}

\section{Preamble}

Graham Higman gave the lectures on which this article is based, in Oxford in 2001.  
They are likely to have been the final lectures he gave; he died in April 2008, at the age of 91.    
He introduced them with the quotes from W.B.~Yeats reproduced above, 
and described the work in preparing them as ``justifying my old age''  and ``keeping me relatively sane.'' 
The second author attended the lectures, and the first author remembers Higman's 
work on related topics some years earlier; this account is developed from recollections 
and from notes taken at the time.  
As such, any errors  are ours, and the presentation and the proofs offered may not 
be as Higman had in mind.  
At various points, and as indicated, we have extended Higman's treatment; 
we also include some of our own observations in an afterword.  

Januarials, which we will define in Section~\ref{defs}, are 2-complexes 
with two distinguished faces, that result from embedding coset graphs for the 
actions of triangle groups into compact orientable surfaces. 
They can be  viewed as being assembled  from two sub-surfaces (essentially those 
two distinguished faces); we give appropriate definitions and tools to explore the 
complexity of this assembly in Section~\ref{topological structure}.  
In Section~\ref{finding} we give sufficient conditions for actions of $\PSL(2,p)$ on 
projective lines to give rise to januarials.  This leads to a number of examples 
presented in Section~\ref{examples}.  Finally Section~\ref{afterword}, our afterword, 
contains some remarks on the coset graph appearing in Norman Blamey's 
1984 portrait of Higman,  and some further examples of januarials.  

 It appears that Higman's study of januarials was sparked by his work on {\em Hurwitz groups}, 
which are non-trivial finite quotients of the $(2,3,7)$-triangle group.  
Higman used coset diagrams to show that for all sufficiently large $n$, the alternating 
group $\textup{Alt}(n)$ is a Hurwitz group, and his work was taken further by the first 
author to determine exactly which $\textup{Alt}(n)$ are Hurwitz, in  \cite{Con-gensAnSn}. 

\ms

\emph{Funding.} This work was partially supported by the New Zealand Marsden Fund
[grant UOA1015] to MDEC; and the United States National Sciences Foundation [grant DMS--1101651] to TRR; and the Simons Foundation [collaboration grant 208567] to TRR.

\ms

\emph{Acknowledgements.} 
We are grateful to Martin Bridson, Harald Helfgott and Peter Neumann 
for encouragement and guidance in the writing of this account, and to Stephen Blamey and Sam~Howison (Chairman of the Oxford Mathematical Institute) for permission to reproduce Norman Blamey's portrait of Graham Higman in Figure~\ref{Higman portrait}.  
We also thank the referee for carefully reading this article and making some very helpful suggestions.

\section{Coset graphs, face maps,  januarials, and surfaces} \label{defs}

Suppose $S$ is a set  endowed with an action $s \mapsto s^g$ by a group $G$,  
and $A$ is a generating set for $G$.  
Define $\Gamma$ to be the graph with vertex set $S,$ and with an
oriented edge labelled by $a \in A$ (called an \emph{$a$-edge}) from vertex $u$ to
vertex $v$ whenever $u^a=v$.  
We will be concerned with situations where $G$ acts transitively on $S$, 
so that $\Gamma$ is connected.  
In that event we can identify $S$ with the right cosets $\set{\,Hg : g \in G\,}$ 
of the stabiliser $H = G_s = \Stab_G(s)$ of any particular $s \in S$, and for this reason,  
$\Gamma$  is known as a \emph{coset graph} or \emph{Schreier graph}, 
or sometimes \emph{coset diagram}, for the action of $G$ on $S$ with respect to $A$.  
When the action of $G$ on $S$ is also regular, we can identify $S$ with the underlying 
set of $G$, in which case $\Gamma$ is the \emph{Cayley graph} of $G$ with respect to $A$. 

Paths in the coset graph may be labelled with words on the generating set $A$ (which 
can be thought of as an alphabet).  
Suppose that a word $w$ on $A^{\pm 1}$  represents $g \in G$, and that $s \in S$.   
Let $\gamma$ be the path in $\Gamma$ obtained by concatenating the unique edge-paths 
in $\Gamma$ from $s^{g^i}$ to $s^{g^{i+1}}$, for each $i \in \Z$, along which one reads $w$.  
This  tours an orbit of $\langle g \rangle$ and is a (closed) circuit precisely when that orbit is finite.  
There is one such path for each orbit.

\begin{figure}[ht]
\psfrag{x2}{}
\psfrag{i}{\tiny{$\infty$}}
\psfrag{0}{\tiny{$0$}}
\psfrag{1}{\tiny{$1$}}
\psfrag{2}{\tiny{$2$}}
\psfrag{3}{\tiny{$3$}}
\psfrag{4}{\tiny{$4$}}
\psfrag{5}{\tiny{$5$}}
\psfrag{6}{\tiny{$6$}}
\psfrag{7}{\tiny{$7$}}
\psfrag{8}{\tiny{$8$}}
\psfrag{9}{\tiny{$9$}}
\psfrag{10}{\tiny{$10$}}
\psfrag{11}{\tiny{$11$}}
\psfrag{12}{\tiny{$12$}}
\psfrag{13}{\tiny{$13$}}
\psfrag{14}{\tiny{$14$}}
\psfrag{15}{\tiny{$15$}}
\psfrag{16}{\tiny{$16$}}
\centerline{\epsfig{file=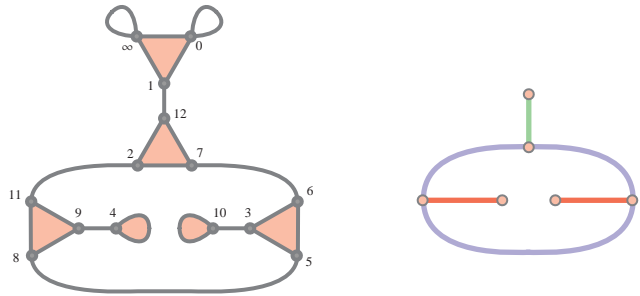}} 
\caption{A coset graph arising from an action of 
the group $\PSL(2,13)$ 
on $\F_{13} \cup \set{\infty}$, with $x : z \mapsto -z$ and $y : z \mapsto (z-1)/z$, 
and its companion graph.  
This results in a $3$-januarial of genus $0$ and simple type $( 1, 0, 0)$.} 
\label{13 associate}
\end{figure}

\begin{figure}[ht]
\psfrag{x2}{}
\psfrag{t}{$t: z \mapsto 1/z$} 
\psfrag{i}{\tiny{$\infty$}}
\psfrag{0}{\tiny{$0$}}
\psfrag{1}{\tiny{$1$}}
\psfrag{2}{\tiny{$2$}}
\psfrag{3}{\tiny{$3$}}
\psfrag{4}{\tiny{$4$}}
\psfrag{5}{\tiny{$5$}}
\psfrag{6}{\tiny{$6$}}
\psfrag{7}{\tiny{$7$}}
\psfrag{8}{\tiny{$8$}}
\psfrag{9}{\tiny{$9$}}
\psfrag{10}{\tiny{$10$}}
\psfrag{11}{\tiny{$11$}}
\psfrag{12}{\tiny{$12$}}
\psfrag{13}{\tiny{$13$}}
\psfrag{14}{\tiny{$14$}}
\psfrag{15}{\tiny{$15$}}
\psfrag{16}{\tiny{$16$}}
\centerline{\epsfig{file=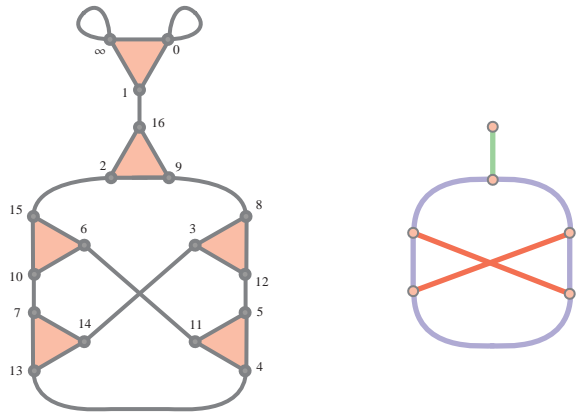}} 
\caption{A coset graph arising from an action of  
the group $\PSL(2,17)$ 
on $\F_{17} \cup \set{\infty}$, with $x : z \mapsto -z$ and $y : z \mapsto (z-1)/z$, 
and its companion graph.  
This results in a $3$-januarial of genus $1$ and simple type $( 1, 1, 0)$.} 
\label{17 associate}
\end{figure}

A \emph{map} is a 2-cell embedding of a connected (multi)graph 
in some closed surface, with its \emph{faces} (the components of 
the complement of the graph in the surface) being homeomorphic 
to open disks in $\mathbb{R}^2$.  Examples include triangulations and 
quadrangulations of the torus, and the Platonic solids (which may be 
viewed as highly symmetric maps on the sphere), with all vertices having the 
same valence and all faces having the same size. 

A \emph{januarial} is a special instance of a map constructed from embedding a coset graph for an action of the 
the $(2,k,l)$ \emph{triangle group} 
$$ 
\Delta(2,k,l) \ = \  \langle \, x,y \,  \mid \,  x^2 = y^k = (xy)^l=1 \, \rangle, 
$$   
with $A= \set{x, y}$. 
Because $x^2=1$, the $x$-edges in such a coset graph $\Gamma$ coming from non-trivial cycles of $x$ 
occur in pairs: whenever there is an  $x$-edge from $u$ to $v$, there is an $x$-edge from $v$ to $u$.  
We may identify each such pair, so as to leave  an \emph{unoriented} $x$-edge between $u$ and $v$.   
Then for each fixed point $s$ of $x$, we attach a 2-cell (which we will call 
an \emph{$x$-monogon}) along the $x$-edge which forms a loop at $s$.   
Similarly, for each orbit of $\langle y \rangle$, we attach a polygon (which we call a \emph{$y$-face}) 
along the path $\gamma$ given by $\langle y \rangle$ as described above.  
This gives a 2-complex, many examples of which appear in this article; see Figures~\ref{13 associate}, \ref{17 associate}, \ref{13 and 17 coset diagram},  \ref{PSL73 associate figure},  \ref{43_associate}, \ref{PSL31}, and  \ref{PSL31_associate}. These and similar figures can be displayed without labels on the edges, because we may 
shade the $y$-faces so that $y$-edges are identifiable as those in the boundaries of $y$-faces, 
while all the remaining edges are $x$-edges.  
We need not indicate orientations on the edges: the $x$-edges for the reason given 
above, and the $y$-edges because we may adopt a convention that all $y$-edges are 
oriented anti-clockwise around the corresponding $y$-faces.  
Note that the length (the number of sides) of each $y$-face divides $k$.

Next, attach a polygon (which we call an $xy$-face) around each orbit of $\langle xy \rangle$.  
As shown by the following lemma, the resulting 2-complex $J$ is homeomorphic to a
closed orientable surface.  
We may call the corresponding embedding of $\Gamma$ 
an  \emph{$m$-face map}, where $m$ is the number  of orbits of $\langle xy \rangle$.  

A \emph{januarial} (and more precisely, a \emph{$k$-januarial}) is the instance when $m=2$ and the orbits  
of $\langle xy \rangle$ have the same size $\abs{\,S}/2$.   
Two  examples of $3$-januarials are given  in Figure~\ref{sphere and torus}.

\begin{figure}[ht]
\psfrag{0}{\tiny{$0$}}
\psfrag{1}{\tiny{$1$}}
\psfrag{2}{\tiny{$2$}}
\psfrag{3}{\tiny{$3$}}
\psfrag{4}{\tiny{$4$}}
\psfrag{5}{\tiny{$5$}}
\psfrag{6}{\tiny{$6$}}
\psfrag{7}{\tiny{$7$}}
\psfrag{8}{\tiny{$8$}}
\psfrag{9}{\tiny{$9$}}
\psfrag{10}{\tiny{$10$}}
\psfrag{11}{\tiny{$11$}}
\psfrag{12}{\tiny{$12$}}
\psfrag{13}{\tiny{$13$}}
\psfrag{14}{\tiny{$14$}}
\psfrag{15}{\tiny{$15$}}
\psfrag{16}{\tiny{$16$}}
\psfrag{i}{\tiny{$\infty$}}
\centerline{\epsfig{file=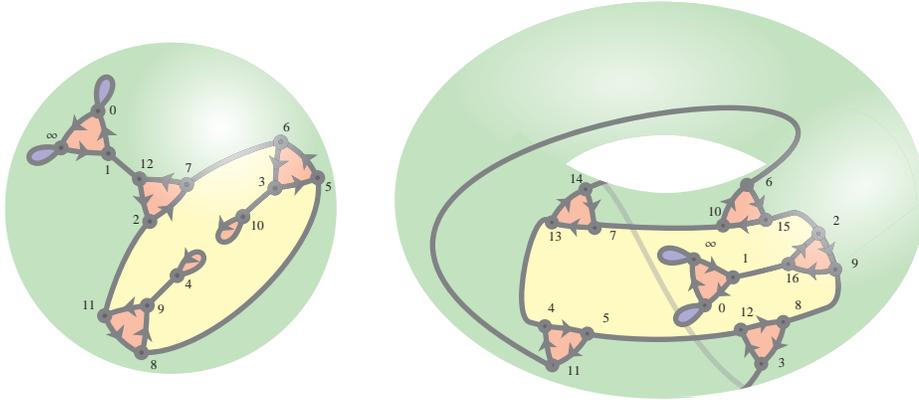}}
\caption{The $3$-januarials arising from the coset graphs in Figures~\ref{13 associate} and \ref{17 associate}. The  unoriented   edges are $x$-edges and oriented edges are  $y$-edges.} \label{sphere and torus} 
\end{figure}

\begin{lemma}
The 2-complex $J$ defined above is homeomorphic to a compact orientable surface without boundary.  \\[-18pt] 
\end{lemma}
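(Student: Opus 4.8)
The plan is to verify directly that $J$ is a compact $2$-manifold without boundary and then that it is orientable. Since the vertex set $S$ is finite, $J$ is a finite $2$-dimensional CW-complex, hence compact; its cells are the vertices $S$, the (unoriented) $x$-edges and the $y$-edges, and the three kinds of $2$-cell, namely the $x$-monogons, the $y$-faces, and the $xy$-faces. To see that $J$ is a closed surface it suffices to check two local conditions: that every edge lies on exactly two face-sides (so that $J$ has no boundary and every edge-interior point has a disk neighbourhood), and that the link of every vertex is a single circle (so that every vertex also has a disk neighbourhood); the interiors of the $2$-cells are open disks by construction.

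For the edge condition I would track, for each edge, the boundary walks running along it. A $y$-edge $s\to s^y$ is crossed once by the $y$-face of the $\langle y\rangle$-orbit of $s$ and once by a single $xy$-face (the step $s\to s^y$ occurring inside the pattern $\dots x\,y\dots$), so it lies on exactly two face-sides. A non-loop $x$-edge $\{s,s^x\}$ is crossed by the $xy$-face through $s$ in the direction $s\to s^x$ and by the $xy$-face through $s^x$ in the direction $s^x\to s$ (these two $xy$-faces may coincide), again two sides. Finally a loop $x$-edge at a fixed point $s$ of $x$ lies on its $x$-monogon and on the one $xy$-face that reads the step $s\xrightarrow{x}s$, so two sides once more.

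The heart of the argument, and the step I expect to be the main obstacle, is showing that each vertex link is a single circle rather than several. I would do this by listing the edge-ends and the face-corners at a vertex $s$ and checking that, glued along shared edge-ends, they close up into one cycle. In the generic case ($s$ fixed by neither $x$ nor $y$) there are three edge-ends --- the incoming $y$-edge, the outgoing $y$-edge, and the $x$-edge --- and three corners: the $y$-face corner joining the two $y$-ends, the $xy$-corner joining the incoming $y$-end to the $x$-end, and the $xy$-corner joining the $x$-end to the outgoing $y$-end. Tracing through these corners visits all three edge-ends in one closed loop, so the link is a circle. At a fixed point of $x$ the loop contributes two edge-ends and the $x$-monogon contributes a fourth corner, and the same trace again closes up into a single circle through four edge-ends; fixed points of $y$ (when $y$ has any), giving $y$-monogons, are handled identically. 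This is the delicate part because it requires getting the cyclic gluing right in each degenerate case and confirming that no spurious extra cycle appears.

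For orientability I would exhibit coherent orientations of the $2$-cells, using the criterion that a closed surface assembled by gluing polygons in pairs along their edges is orientable exactly when the faces can be oriented so that each edge is traversed once in each direction by the two incident boundary walks. Orienting each $y$-face and each $xy$-face in its natural reading direction, every $y$-edge is traversed twice in the \emph{same} direction (once by its $y$-face, once by its $xy$-face), while every non-loop $x$-edge is already traversed once in each direction by the two $xy$-face-sides meeting along it; reversing the boundary orientation of every $xy$-face therefore makes every $y$-edge traversed once in each direction while preserving the once-each-direction property of the $x$-edges, and the free orientations of the monogons can be chosen to agree along their loops. Hence $J$ is orientable, completing the proof. (Alternatively, one could observe that $(x,y,(xy)\inv)$ acting on $S$ is a transitive constellation, whence $J$ is a branched cover of the sphere and is automatically a compact orientable surface.)
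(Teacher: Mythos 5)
Your proof is correct, and its underlying strategy --- compactness from finiteness of $S$, the check that every edge lies on exactly two face-sides, the check that every vertex link is a single circle, and coherent face orientations --- is the same as the paper's. The organisational difference is worth noting. You work directly with $J$, so the link verification splits into cases (generic vertices, fixed points of $x$, fixed points of $y$), which you rightly flag as the delicate part and handle correctly. The paper instead begins with a device that makes this step uniform: it reverts to the two oppositely oriented $x$-edges between $u$ and $u^x$ and inserts an $x$-digon between each such pair. In the resulting complex every vertex has exactly four edge-ends (an incoming and an outgoing $x$-edge, an incoming and an outgoing $y$-edge, with loops supplying both ends), every $x$-edge lies on exactly one $x$-face and one $xy$-face, and every $y$-edge on exactly one $y$-face and one $xy$-face, so the local picture is identical at all vertices and no case analysis is needed; collapsing the digons at the end recovers $J$ embedded in the same surface. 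In the other direction, your treatment of orientability is more explicit than the paper's: where the paper says only that the edge directions give consistent orientations around all faces, your bookkeeping --- reverse the boundary orientation of every $xy$-face, note that the two $xy$-face-sides along a non-loop $x$-edge already oppose each other and remain opposed, and choose monogon orientations freely --- is precisely what makes that one-line claim rigorous. Your closing alternative via constellations and branched covers of the sphere is also valid, though it would need an extra step matching the cell structure of the covering surface with that of $J$.
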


\begin{proof}

In the construction of $J$ we identified oppositely oriented $x$-edges in pairs.  
For this proof, however, it is convenient to revert to the pairs of oriented $x$-edges,  
and insert a digon (which we call an $x$\emph{-digon}) between each pair.  
We will show that the resulting complex gives an  orientable surface without boundary.  
It will follow that the same is true of a januarial, because we have an embedding  in 
the same surface when the digons (any two of which have no $x$-edge in common) 
are successively collapsed to single edges.    

Now in this complex, each vertex has valence four: it has both an incoming and an 
outgoing $x$-edge (coming from an edge-loop in the event that $x$ fixes the vertex),  
and both an incoming and an outgoing  $y$-edge (which, similarly, may come from a loop). 
Each $x$-edge is incident with exactly one $x$-face (that is, an $x$-monogon or an $x$-digon), 
and one $xy$-face.  Each $y$-edge is incident with exactly one $y$-face and one $xy$-face.   
It follows that the complex gives a surface without boundary.  Moreover, the surface is 
orientable because the directions of the edges give consistent orientations around the 
perimeters of all the faces.   
Finally, since $S$ is finite, the surface is compact.
\end{proof}
${}$\\[-40pt]

\section{The topological complexity of januarials} \label{topological structure}

Higman gave a notion of topological complexity which we call \emph{simple type} below.    
It concerns how a januarial $J$ is assembled from the subspaces $S_1$ and $S_2$ that are the closures of its two $xy$-faces.    He recognised that some januarials are beyond the scope of this notion; indeed, he made some ad hoc calculations for the examples in Figures~\ref{43_associate} and  \ref{PSL31_associate}  which show as much.   Accordingly, below, we define a more general notion which we call \emph{type}, which applies to all januarials,  and we explain how to calculate it in general.

Topological features of $J$, $S_1$ and $S_2$ come into clearer focus when we collapse each $x$-monogon and each $y$-face in $J$ to a point.   Any two $y$-faces in a januarial are disjoint. The same is true of any two $x$-monogons.  And an $x$-monogon can only meet a $y$-face at a single vertex.  So these collapses do not change the homeomorphism types of $J$, $S_1$ or $S_2$.  

Let $\Gamma$ be the 1-skeleton of $J$ --- that is, the coset graph.  
 Let  $\bar{J}$, $\bar{S_1}$, $\bar{S_2}$ and $\bar{\Gamma}$ be the images of $J$, $S_1$, $S_2$ and $\Gamma$ under these collapses.  We call $\bar{\Gamma}$ a \emph{companion graph}.  Then  $\bar{J} = \bar{S_1} \cup \bar{S_2}$ is a closed surface obtained by some identification of  $\bar{S_1}$ and  $\bar{S_2}$ along their boundaries.  Taking another perspective, $\bar{J}$ is the result of adding two faces to $\bar{\Gamma}$, via attaching maps $\rho_1$ and $\rho_2$ induced by the maps that attach  the $xy$-faces to $\Gamma$.     

Examples of such $\Gamma$ and $\bar{\Gamma}$  appear in Figures~\ref{13 associate}, \ref{17 associate},  \ref{PSL73 associate figure},  \ref{43_associate},   \ref{PSL31_associate}, \ref{A_16}, and \ref{11}.  Each one is drawn in such a way that the cyclic order in which edges emanate from vertices agrees with that in which $x$-edges meet $y$-faces in $\Gamma$.  So, as the $y$-edges in $\Gamma$ are oriented anti-clockwise around the $y$-faces, one can read off $\rho_i$ by following successive edges in $\bar{\Gamma}$ in such a way that on arriving at a vertex, one exits along the right-most of all the remaining edges (except if the vertex has valence one, in which case one exits along the edge by which one arrived).   

As $\langle xy \rangle$ yields exactly two orbits when acting on $S$, together $\rho_1$ and $\rho_2$ traverse each edge in $\bar{\Gamma}$ twice, once in each direction.  The edges comprising the subgraph $\mathcal{G} := \bar{S_1} \cap \bar{S_2}$, shown in blue in the figures, are traversed by  $\rho_1$ in one direction and  $\rho_2$ in the other.  Those traversed by $\rho_1$ (resp.\ $\rho_2$) in both directions are shown in red (resp.\ green).   

The collapses carrying $J$ to $\bar{J}$ leave only the two $xy$-faces, those $x$-edges which are not loops, and one vertex  for each $y$-face in $J$.  These collapses do not alter the Euler characteristic.
Since $J$ is a closed orientable surface, we find that the genus of $J$ is readily calculated as follows.

\begin{lemma} \label{genus}
Twice the genus of a januarial equals the number of $x$-edges which are not loops minus the number of $y$-faces.   
\end{lemma}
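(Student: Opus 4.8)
The plan is to compute the Euler characteristic of the collapsed surface $\bar{J}$ directly from its CW-structure and then invoke the standard relation $\chi = 2 - 2g$ for a closed orientable surface (which applies by the Lemma preceding Section~\ref{topological structure}). First I would recall, as recorded in the paragraph just before the statement, that the collapses carrying $J$ to $\bar{J}$ preserve the Euler characteristic, so it suffices to count vertices, edges and faces of $\bar{J}$. By the same paragraph, $\bar{J}$ has exactly two faces (the two $xy$-faces, which survive the collapses), one vertex for each $y$-face of $J$ (since each $y$-face is collapsed to a point and the $x$-monogons are absorbed into those points), and its edges are precisely the $x$-edges of $J$ that are not loops (the loop $x$-edges bound $x$-monogons that get collapsed, and all $y$-edges disappear into the collapsed $y$-faces).

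Let me write $E$ for the number of $x$-edges of $J$ that are not loops and $V$ for the number of $y$-faces of $J$. Then the computation is
$$
\chi(J) \ = \ \chi(\bar{J}) \ = \ V - E + 2.
$$
Combining this with $\chi(J) = 2 - 2g$, where $g$ is the genus of $J$, gives $2 - 2g = V - E + 2$, which rearranges to $2g = E - V$, exactly the asserted identity.

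The one point that needs genuine care — and which I expect to be the main obstacle — is verifying that the vertex count of $\bar{J}$ is exactly the number of $y$-faces, rather than something slightly larger or smaller. Here I would lean on the three topological facts stated in the first paragraph of Section~\ref{topological structure}: any two $y$-faces are disjoint, any two $x$-monogons are disjoint, and an $x$-monogon can meet a $y$-face only at a single vertex. These facts guarantee that the collapses are well defined and that distinct $y$-faces collapse to distinct points, so no two are inadvertently identified and the vertex set of $\bar{J}$ is in bijection with the set of $y$-faces of $J$. Dually, one must confirm that every surviving vertex of $\bar{J}$ arises this way — that is, that every vertex of $\Gamma$ lies on some $y$-face — which holds because each orbit of $\langle y \rangle$ gives a $y$-face and these orbits partition $S$, so every vertex is absorbed into exactly one collapsed $y$-face.

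Once the three counts ($V$ vertices, $E$ edges, $2$ faces) are pinned down, the remainder is the short arithmetic above, with no further subtlety. I would therefore present the proof as: (i) reduce to counting the CW-data of $\bar{J}$ using the invariance of $\chi$ under the collapses; (ii) identify faces, vertices and edges using the disjointness facts; (iii) apply $\chi = 2 - 2g$ and rearrange.
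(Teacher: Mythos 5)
Your proposal is correct and follows essentially the same route as the paper: both arguments collapse $J$ to $\bar{J}$, note that the collapses preserve Euler characteristic and leave one vertex per $y$-face, the non-loop $x$-edges, and the two $xy$-faces, and then apply $\chi = 2-2g$. Your explicit verification that every vertex of $\Gamma$ lies on exactly one $y$-face (so the vertex count is exactly the number of $y$-faces) is a detail the paper leaves implicit, but the substance is identical.
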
     

For example, this is $6-6=0$ in the left-hand example of Figure~\ref{sphere and torus}  and is $8-6=2$ in the right-hand example, giving genera $0$ and $1$, respectively.   

Now we turn to genera associated to $S_1$ and $S_2$, or their images $\bar{S_1}$ and  $\bar{S_2}$.  
Defining these requires care, since $\bar{S_1}$ and $\bar{S_2}$ may fail to be sub-surfaces of $\bar{J}$ (and likewise $S_1$ and $S_2$ fail to be sub-surfaces of $J$): they are closed surfaces from which the interiors of some collection of discs have been removed, but  the boundaries of those discs need not be disjoint.  (Figures~\ref{43_associate} and  \ref{PSL31_associate} provide such examples.)    But if we take a small closed neighbourhood $R_i$  of $\bar{S_i}$ in $\bar{J}$, we get a genuine sub-surface which serves as a suitable proxy:  

\begin{lemma} \label{surface}
$R_1$ and $R_2$ are orientable surfaces, and they retract to  $\bar{S_1}$ and $\bar{S_2}$, respectively.    \\[-18pt] 
\end{lemma}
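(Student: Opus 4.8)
The plan is to build $R_i$ explicitly as a regular neighbourhood and then analyse its local structure to confirm both claims.  First I would work inside the closed orientable surface $\bar{J}$, and recall that $\bar{S_i}$ is the image of the closure of one $xy$-face after the collapses, so it is a closed subcomplex of $\bar{J}$ consisting of the $xy$-face $i$ together with those edges and vertices of $\bar{\Gamma}$ lying in its boundary attaching map $\rho_i$.  I would take $R_i$ to be the closed regular neighbourhood of $\bar{S_i}$ in $\bar{J}$, built by fattening $\bar{S_i}$ in the second-derived (or any sufficiently fine) subdivision of the cell structure on $\bar{J}$.  That $R_i$ deformation retracts onto $\bar{S_i}$ is then the standard fact that a regular neighbourhood of a subcomplex retracts onto it, so the second assertion is essentially immediate once $R_i$ is set up as a regular neighbourhood.

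The substantive claim is that $R_i$ is an honest orientable surface, i.e.\ a surface with boundary (possibly with several boundary circles), sitting inside $\bar{J}$.  The cleanest route is to observe that $R_i$ is a codimension-zero submanifold-with-boundary of the orientable surface $\bar{J}$: any regular neighbourhood of a subcomplex of a surface is itself a surface with boundary, and it inherits orientability from the ambient orientable $\bar{J}$ (one restricts the global orientation of $\bar{J}$).  So the orientability follows for free from Lemma~1, which established that $J$, hence $\bar{J}$, is a closed orientable surface.  The only care needed is to make sure the neighbourhood is \emph{regular} — taken small enough that it does not accidentally swallow parts of $\bar{S}_j$ for $j \ne i$ beyond the shared subgraph $\mathcal{G}$, and that it captures the local picture of $\bar{S_i}$ faithfully.

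The step I expect to be the genuine obstacle is exactly the one the preamble to the lemma flags: $\bar{S_i}$ is \emph{not} itself a sub-surface, because the boundary circles of the discs one removes from the closed surface carrying $\bar{S_i}$ need not be disjoint (two boundary arcs of the $xy$-face may run along the same edge of $\bar{\Gamma}$, and boundary vertices may be revisited).  At such a shared edge or pinch vertex, $\bar{S_i}$ locally looks like two sheets meeting along an edge or a point, which is not locally Euclidean, so $\bar{S_i}$ genuinely fails to be a manifold there.  The regular neighbourhood $R_i$ repairs this: near a pinch the fattening separates the two incident sheets into an annular collar, and near a doubly-traversed edge it produces a strip on each side.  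I would therefore spend the proof verifying that for every cell of $\bar{S_i}$ — interior of the face, an edge traversed once or twice by $\rho_i$, and a vertex of any valence — the corresponding piece of $R_i$ is a disc, so that $R_i$ is assembled from discs glued along arcs, yielding a surface with boundary.  The orientability and the retraction then come along as above, and the key realisation to record is that passing from $\bar{S_i}$ to $R_i$ is precisely what resolves the non-manifold points while preserving homotopy type.
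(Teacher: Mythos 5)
Your proposal is correct and takes essentially the same route as the paper: both realise $R_i$ as a small (regular) neighbourhood of $\bar{S_i}$ in $\bar{J}$, get the retraction from the standard fact that such neighbourhoods retract to the subcomplex, and inherit orientability from the ambient closed orientable surface $\bar{J}$. The only cosmetic difference is that the paper reduces to the statement for a neighbourhood of the subgraph $\mathcal{G}$ (writing $R_i = \bar{S_i} \cup N(\mathcal{G})$) rather than carrying out your cell-by-cell local check, but the underlying regular-neighbourhood argument is identical.
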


\begin{proof}
A small closed neighbourhood of $\mathcal{G}$ (or indeed  of any subgraph of the 1-skeleton of a finite cellulation of a closed surface) is  a sub-surface with boundary and retracts to $\mathcal{G}$.  
Similarly  $R_i$, which is the union of $\bar{S_i}$ with a small closed neighbourhood of  $\mathcal{G} $, is orientable and retracts to  $\bar{S_i}$.   It is orientable because $\bar{J}$ is orientable. 
\end{proof}

We define the  \emph{type} of $J$ to be the pair $((h_1,g_1), (h_2,g_2))$, 
where $g_i$ and $h_i$ are the genus of $R_i$  and the number of connected 
components of the boundary of $R_i$ respectively,  for $i=1,2$.  
We will not distinguish 
between types $((h_1,g_1), (h_2,g_2))$ and $((h_2,g_2), (h_1,g_1))$.

The most straightforward way in which $R_1$ and $R_2$ can be assembled to make $\bar{J}$ occurs when 
 $R_1 \cap R_2$ is a disjoint union of $h$ annuli, where $h = h_1 = h_2$, or in other words, when $\bar{J}$ is homeomorphic to a join of $R_1$ and $R_2$ in which the boundary components are paired off and identified.   In this case, we say that the januarial $J$ is of \emph{simple type} $(h,g_1,g_2)$.  We do not distinguish between the simple types $(h,g_1,g_2)$ and $(h,g_2,g_1)$.   
  
Maps in which the graph is  embedded in a suitably non-pathological manner 
(for instance as a subgraph of the $1$-skeleton of a finite cellulation of the surface)  
have the property that a small neighbourhood is a disjoint union of annuli if and only if the graph is a collection of disjoint simple circuits.   So, as $R_1 \cap R_2$ is a small neighbourhood of $\mathcal{G}$, one can recognise simple type from the graph $\bar{\Gamma}\,$: 
 
 \begin{lemma} \label{simple type}
 $J$ is of simple type if and only if $\mathcal{G}$ is a collection of disjoint simple circuits.  
 In that case, if $J$ has simple type $(h,g_1,g_2)$ then $h$ is the number of circuits.  
 \end{lemma}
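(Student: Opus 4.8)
The plan is to deduce the equivalence from the topological fact recalled just before the statement: since the paper already notes that $R_1 \cap R_2$ is a small neighbourhood of the subgraph $\mathcal{G} \subseteq \bar{\Gamma}$, that neighbourhood is a disjoint union of annuli if and only if $\mathcal{G}$ is a disjoint union of simple circuits, with the number of annuli equal to the number of circuits. Everything then reduces to reconciling this with the definition of simple type, which demands not only that $R_1 \cap R_2$ be a union of annuli but also that their number $h$ satisfy $h = h_1 = h_2$.

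The \emph{only if} direction is immediate. If $J$ is of simple type then $R_1 \cap R_2$ is, by definition, a disjoint union of $h$ annuli; being a small neighbourhood of $\mathcal{G}$, the recalled fact forces $\mathcal{G}$ to be a disjoint union of $h$ simple circuits.

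For the \emph{if} direction, suppose $\mathcal{G}$ is a disjoint union of $h$ simple circuits. The recalled fact gives that $R_1 \cap R_2$ is a disjoint union of $h$ annuli, which is part of what simple type requires; it remains to verify $h = h_1 = h_2$. I would do this by collar bookkeeping. Recall from the proof of Lemma~\ref{surface} that $R_i = \bar{S_i} \cup (R_1 \cap R_2)$, so that the boundary circles of $R_1$ sit on the $\bar{S_2}$-side of the collar $R_1 \cap R_2$ and those of $R_2$ on the $\bar{S_1}$-side. Fix a circuit $C$ of $\mathcal{G}$, with collar annulus $A \subseteq R_1 \cap R_2$. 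Since $\bar{J}$ is orientable, $C$ is two-sided; and since each edge of $\mathcal{G}$ is traversed once by $\rho_1$ and once by $\rho_2$ (so one of the two faces of $\bar{J}$ lies on each side), one side of $C$ lies in the interior of $\bar{S_1}$ and the other in the interior of $\bar{S_2}$. Hence the two boundary circles of $A$ contribute exactly one circle to $\partial R_1$ and exactly one to $\partial R_2$. As the $h$ annuli are disjoint, this sets up bijections between the circuits and the boundary components of $R_1$, and between the circuits and those of $R_2$; thus $h_1 = h = h_2$, and $J$ is of simple type $(h,g_1,g_2)$. The final count in the statement follows, since the recalled fact matches circuits to annuli one-for-one and the collar argument matches annuli to boundary components one-for-one.

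I expect the collar bookkeeping to be the only real obstacle: the equivalence between annuli and disjoint simple circuits is handed to us, but confirming that the annular overlap identifies the boundary circles of $R_1$ and $R_2$ in matched pairs — so that $\bar{J}$ is the claimed join with $h = h_1 = h_2$ — hinges on each circuit of $\mathcal{G}$ being a two-sided curve with an $\bar{S_1}$-side and an $\bar{S_2}$-side, which is where orientability of $\bar{J}$ and the defining property of $\mathcal{G}$ enter.
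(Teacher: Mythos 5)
Your proposal is correct and follows the same route as the paper: the paper offers no separate proof of this lemma, presenting it as an immediate consequence of the remark that $R_1 \cap R_2$ is a small neighbourhood of $\mathcal{G}$ together with the fact that such a neighbourhood is a disjoint union of annuli if and only if the graph is a disjoint union of simple circuits. Your additional collar bookkeeping (each circuit having an $\bar{S_1}$-side and an $\bar{S_2}$-side, so each annulus contributes exactly one boundary circle to each $R_i$, forcing $h = h_1 = h_2$) correctly supplies the verification that the paper leaves implicit in its definition of simple type.
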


The genus of a januarial $J$ (equivalently, of  $\bar{J}$) of simple type is present in the data $(h,g_1,g_2)$.  When the handles (that is, the $h$ annuli from $R_1 \cap R_2$) that connect ${R_1}$ and ${R_2}$  are severed one-by-one, the genus falls by $1$ each time, until we only have one handle connecting  ${R_1}$ and ${R_2}$, and hence a surface of genus $g_1+ g_2$.  Since $J$ has $h$ handles to begin with, this gives the following:

\begin{lemma} \label{genus from type} The genus of a januarial $J$ of simple type  is 
 $g_1 +g_2 +h -1$.
\end{lemma}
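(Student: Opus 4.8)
The plan is to read off the genus by computing the Euler characteristic of $\bar{J}$ through inclusion--exclusion, exploiting the simple-type decomposition $\bar{J} = R_1 \cup R_2$. Since the collapses carrying $J$ to $\bar{J}$ preserve homeomorphism type, $J$ and $\bar{J}$ have the same genus, say $G$, so it suffices to compute the genus of the closed orientable surface $\bar{J}$. By the definition of simple type, $R_1 \cap R_2$ is a disjoint union of $h$ annuli and $R_1 \cup R_2 = \bar{J}$ (the neighbourhoods $R_i$ of $\bar{S_i}$ cover $\bar{J}$ since $\bar{S_1} \cup \bar{S_2} = \bar{J}$, and they overlap only in a neighbourhood of $\mathcal{G}$).

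First I would record the Euler characteristics of the pieces. By Lemma~\ref{surface} each $R_i$ is a compact orientable surface; by hypothesis it has genus $g_i$, and in the simple-type case it has exactly $h_i = h$ boundary circles, so $\chi(R_i) = 2 - 2g_i - h$. Each of the $h$ annuli forming $R_1 \cap R_2$ has Euler characteristic $0$, whence $\chi(R_1 \cap R_2) = 0$.

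Next I would assemble these via inclusion--exclusion. Because $\mathcal{G}$ sits inside the $1$-skeleton of a finite cellulation of $\bar{J}$ (the non-pathological embedding underlying Lemma~\ref{surface} and Lemma~\ref{simple type}), the sets $R_1$ and $R_2$ are realisable as subcomplexes of a common finite cellulation whose union is $\bar{J}$ and whose intersection is the $h$ annuli, so $\chi(\bar{J}) = \chi(R_1) + \chi(R_2) - \chi(R_1 \cap R_2)$. This gives
$$\chi(\bar{J}) = (2 - 2g_1 - h) + (2 - 2g_2 - h) - 0 = 4 - 2g_1 - 2g_2 - 2h.$$
Equating with $2 - 2G$ and solving yields $G = g_1 + g_2 + h - 1$, as claimed.

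The Euler-characteristic bookkeeping is routine; the only point needing care is the validity of inclusion--exclusion, i.e.\ that $R_1$ and $R_2$ genuinely cover $\bar{J}$ and meet in exactly the $h$ annuli. This is precisely what the simple-type hypothesis supplies, together with the fact (already used for Lemma~\ref{simple type}) that a small neighbourhood of $\mathcal{G}$ is a union of annuli exactly when $\mathcal{G}$ is a disjoint union of simple circuits. Alternatively one can formalise the severing picture sketched above: the $h$ annulus cores are $h$ disjoint circles whose complement is $R_1 \sqcup R_2$, and cutting along the first $h-1$ of them keeps the surface connected — since $R_1$ and $R_2$, each connected as the closure of a single disc, remain joined by the surviving annuli — so each such cut is non-separating and lowers the genus by $1$, leaving a single tube joining a genus-$g_1$ to a genus-$g_2$ surface, of genus $g_1 + g_2$; thus $G - (h-1) = g_1 + g_2$. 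The main thing to check along this second route is exactly the non-separating claim, which the Euler-characteristic argument avoids.
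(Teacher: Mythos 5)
Your proof is correct, but it takes a different route from the paper. The paper's argument is the severing picture you relegate to your final paragraph: the $h$ annuli of $R_1 \cap R_2$ are viewed as handles joining $R_1$ to $R_2$, and they are cut one by one, the genus dropping by $1$ at each cut, until a single tube joins a genus-$g_1$ surface to a genus-$g_2$ surface, giving genus $g_1+g_2$; since $h-1$ cuts were made, the genus of $\bar{J}$ is $g_1+g_2+h-1$. Your primary argument instead computes $\chi(\bar{J})$ by inclusion--exclusion: using Lemma~\ref{surface} and the simple-type hypothesis that $R_1 \cap R_2$ is a disjoint union of $h$ annuli (so each $R_i$ has exactly $h$ boundary circles and $\chi(R_1\cap R_2)=0$), you get $\chi(\bar{J}) = (2-2g_1-h)+(2-2g_2-h)$ and solve $2-2G = 4-2g_1-2g_2-2h$. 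The two approaches trade intuition for rigour: the paper's cutting argument makes geometrically visible why each handle beyond the first contributes a unit of genus, but it silently assumes that each successive cut circle is non-separating (true here because $R_1$ and $R_2$ remain connected and joined by the surviving annuli, but unproved in the paper); your Euler-characteristic computation sidesteps that issue entirely and reduces the lemma to additivity of $\chi$ over a cellular decomposition, at the cost of being less picturesque. You correctly identified the non-separating claim as the one point of care in the severing route, which is precisely the gap in the paper's own sketch, so your write-up is, if anything, the more complete of the two.
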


Figures~\ref{13 associate}, \ref{17 associate}, \ref{PSL73 associate figure}  and \ref{A_16} show examples of coset graphs which give januarials of simple type, and Figures~\ref{43_associate},  \ref{PSL31_associate} and \ref{11} show examples  which give januarials that are not of simple type.  In each case, the caption indicates the genus of the januarial and the details of the type.  The genus of the januarial can be established in each case  via  an Euler characteristic calculation (as per Lemma~\ref{genus from type} for those of simple type).  

For the examples of simple type, $h$ is   immediately evident from the companion graph  $\bar{\Gamma}$ on account of Lemma~\ref{simple type}.  For those not of simple type, our 
next lemma gives a means of identifying $h_1$ and $h_2$ from $\bar{\Gamma}$.   Examples of \emph{partitions of $\mathcal{G}$ into circuits} in the sense of this lemma can be seen in Figures~\ref{43_associate},  \ref{PSL31_associate} and \ref{11}. 

 \begin{lemma} \label{finding h_i}
Let $\mathcal{P}$ be the set of all paths that traverse successive edges in $\mathcal{G}$
in the directions they are traversed by $\rho_1$ (resp.\ $\rho_2$), in such a way
that whenever such a path reaches a vertex, it continues along the right-most of the
other edges in $\mathcal{G}$ incident with that vertex.
(The next edge is necessarily  traversed by $\rho_1$ (resp.\ $\rho_2$) in that direction.)
All such paths close up into circuits, and $\mathcal{P}$  partitions $\mathcal{G}$,
in the sense that the  union of the circuits is $\mathcal{G}$ and no two share an edge.
The cardinality of  $\mathcal{P}$ is  $h_2$ (resp.\ $h_1$).   \\[-18pt]
\end{lemma}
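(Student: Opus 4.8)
The plan is to realise $h_2$ as the number of boundary circles of the compact surface $R_2 = \bar{S_2}\cup N(\mathcal{G})$ produced by Lemma~\ref{surface} (here $N(\mathcal{G})$ denotes the small closed neighbourhood of $\mathcal{G}$), and then to read off $\partial R_2$ combinatorially so that its components become precisely the circuits of $\mathcal{P}$ traced in the $\rho_1$-directions. The guiding picture is that $\partial R_2$ lies in the interior of the first $xy$-face and runs parallel to the edges of $\mathcal{G}$, and that following it around reproduces the right-most rule. Since the construction is symmetric in $\bar{S_1}$ and $\bar{S_2}$, I would treat only the $\rho_1$-case and obtain the $\rho_2$-case (giving $h_1$) by interchanging the two faces.

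First I would record the picture along edges. Write $F_1,F_2$ for the interiors of the two $xy$-faces, so that $\bar{S_i}=\overline{F_i}$ and $R_i=\overline{F_i}\cup N(\mathcal{G})$. Each edge of $\bar\Gamma$ is bordered on its two sides by $F_1$ and/or $F_2$: the red edges have $F_1$ on both sides, the green edges have $F_2$ on both sides, and the blue edges (which constitute $\mathcal{G}$) have $F_1$ on one side and $F_2$ on the other. Hence the green edges lie in the interior of $R_2$ and the red edges lie outside $R_2$, while a blue edge is interior to $R_2$ except for the outer rim of its collar on the $F_1$-side; this rim is the edge's only contribution to $\partial R_2$. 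Because the $F_1$-side of a blue edge is the side traversed by the boundary walk $\rho_1$ of $F_1$, the arc of $\partial R_2$ parallel to a blue edge runs in that edge's $\rho_1$-direction, and each edge of $\mathcal{G}$ contributes exactly one such arc.

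The crux is the behaviour of $\partial R_2$ at a vertex $v$, where I would use the rotation system coming from the orientation of $\bar{J}$ together with the drawing convention fixed just before the lemma. Near $v$ the region $R_2$ consists of the $F_2$-corners at $v$ together with the blue-edge collars, and $\partial R_2$ threads through the complementary $F_1$-corners, passing over any intervening red edges. Tracing $\partial R_2$ with $R_2$ kept on a fixed side, I would show that on arriving at $v$ along the $F_1$-side of a blue edge it leaves along the $F_1$-side of the \emph{right-most} remaining edge of $\mathcal{G}$ at $v$, in the same rotational sense by which $\rho_i$ is read off from $\bar\Gamma$; the same orientation bookkeeping shows that this outgoing edge is traversed in its $\rho_1$-direction, verifying the parenthetical assertion. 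Pinning down this turning direction and the direction-consistency exactly from the stated conventions is the main obstacle; the rest is formal.

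Finally I would assemble the conclusion. As the boundary of a compact surface, $\partial R_2$ is a disjoint union of circles, so every component closes up; combinatorially this also follows from finiteness, since from any blue edge there is always a next blue edge to continue to and $\mathcal{G}$ is finite. Each blue edge furnishes exactly one boundary arc, so these circuits cover every edge of $\mathcal{G}$ with none used twice, which is precisely the assertion that $\mathcal{P}$ partitions $\mathcal{G}$. The components of $\partial R_2$ are thus in bijection with the circuits of $\mathcal{P}$, giving $\abs{\mathcal{P}}=h_2$. Interchanging $F_1$ and $F_2$, so that $\partial R_1$ now runs along the $F_2$-sides of the blue edges in their $\rho_2$-directions, yields the parenthetical statement $\abs{\mathcal{P}}=h_1$ for the $\rho_2$-directions.
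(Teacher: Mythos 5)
Your global strategy coincides with the paper's: realise the circuits of $\mathcal{P}$ as the boundary circles of the surface $R_2$ from Lemma~\ref{surface}, using the fact that each edge of $\mathcal{G}$ contributes exactly one arc of $\partial R_2$, namely the rim of its collar on the side of the first $xy$-face. Your edge-by-edge bookkeeping (red edges outside $R_2$, green edges interior to $R_2$, blue edges giving one rim arc each) and your final assembly are correct. The problem is that the one step you defer --- that on arriving at a vertex $v$ the boundary $\partial R_2$ exits along the $F_1$-side of the \emph{right-most} remaining edge of $\mathcal{G}$ at $v$, traversed in its $\rho_1$-direction --- is not a formality to be pinned down later: it is the entire content of the lemma, as you half-acknowledge by calling it ``the main obstacle.'' As written, your argument shows that the circuits of $\mathcal{P}$ and the components of $\partial R_2$ each decompose $\mathcal{G}$ edge-disjointly, but not that they are the \emph{same} decomposition, so the count $\abs{\mathcal{P}}=h_2$ is not yet established.

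The paper closes exactly this step by a device that sidesteps the orientation bookkeeping you are stuck on: it traces the attaching circuit $\rho_1$ itself rather than $\partial R_2$, so that the chirality is inherited from the convention stated before the lemma, namely that $\rho_1$ exits each vertex along the right-most remaining edge of $\bar{\Gamma}$ (not merely of $\mathcal{G}$). If that right-most edge $e'$ lies in $\mathcal{G}$, it is in particular the right-most edge of $\mathcal{G}$, and it runs alongside the same hole boundary $B$. If instead $e'$ is red, then $\rho_1$ leaves $B$ and heads into the hole; but a red edge is traversed by $\rho_1$ exactly twice, once in each direction (blue edges only once), so $\rho_1$ must later re-enter $v$ along $e'$ and then exit along the next right-most edge. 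Iterating, the first edge of $\mathcal{G}$ along which $\rho_1$ leaves $v$ is precisely the right-most edge of $\mathcal{G}$ at $v$, and it continues alongside $B$; and since a blue edge is traversed by $\rho_1$ only once, its exit direction is automatically its $\rho_1$-direction, which also disposes of the parenthetical assertion you were leaving to ``orientation bookkeeping.'' If you prefer to keep your formulation via $\partial R_2$, you must supply the corresponding local argument at $v$: rotating from the arrival edge in the sense of the drawing convention, every corner up to the next blue edge is an $F_1$-corner and the intervening edges are red, so $\partial R_2$ threads through those corners, crosses those red edges, and picks up the collar of that next blue edge --- that is where the right-most rule gets verified rather than asserted.
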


\begin{proof}
We will prove the result for $\rho_1$.  The same argument holds for $\rho_2$ with the  subscripts $1$ and $2$ interchanged.  

By construction, the portion of the circuit $\rho_1$ that falls in $\mathcal{G}$ runs close alongside the boundaries of the $h_2$ holes in $R_2$.   Consider the situation  where $\rho_1$ is traversing an edge $e$ in  $\mathcal{G}$, and let $B$ denote the boundary of  the  hole that runs alongside --- see Figure~\ref{progress}.  At the terminal vertex $v$ of $e$, because of our convention for drawing companion graphs,   
$\rho_1$ will continue along the right-most of the other incident edges in $\bar{\Gamma}$.  If that edge $e'$ is in $\mathcal{G}$, it also runs alongside $B$.  (This happens at $u$ in the figure.) Suppose, on the other hand, that $e'$ is not in $\mathcal{G}$.  Then $\rho_1$ does not run alongside $B$, but rather heads into the interior of $R_2$.  (This happens at $v$ in the figure.)  At some later time, $\rho_1$ must return along $e'$ in the opposite direction (perhaps visiting another  portion of $B$ in the interim) since the edges $\rho_1$ traverses exactly once are precisely those   in $\mathcal{G}$.  Hence $\rho_1$ arrives back at $v$ and then continues along the (new) right-most edge --- which will either be alongside $B$, or take it back into the interior of $R_2$, again to   return eventually along that same edge.  Repeating this, we eventually find the next edge in $\mathcal{G}$ incident with $v$ that continues alongside $B$.   
It follows that however many detours into the interior of $R_2$ are required,  it is the \emph{right-most of the edges in $\mathcal{G}$} incident with $v$ (aside from $e$) that continues alongside $B$.  So the circuits traversed as explained in the statement of the lemma are those that run alongside the boundaries of the holes in $R_2$.  The remaining claims easily follow from this.   
\end{proof}

\begin{figure}[ht]
\psfrag{v'}{{$v$}}
\psfrag{v}{{$u$}}
\psfrag{B}{$B$}
\psfrag{S}{$\bar{S_1}$}
\centerline{\epsfig{file=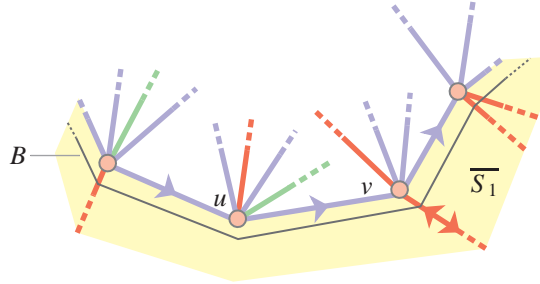}} 
\caption{Tracking the boundary of one of the holes in $R_2$.} \label{progress}
\end{figure}

Given $h_i$ (for $i = 1$ or $2$), one can determine $g_i$ from $\bar{\Gamma}$ via the following observation: 

\begin{lemma} \label{finding g_i}
The genus $g_i$ of $R_i$  satisfies $2-2g_i = V_i  - E_i + h_i + 1$ where $V_i$ and $E_i$ denote the number of vertices and edges, respectively, in the  subgraph of $\bar{\Gamma}$ visited by the attaching map of the face of $\bar{S_i}$.   \\[-18pt] 
\end{lemma}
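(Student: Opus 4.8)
The plan is to compute the Euler characteristic of $R_i$ in two ways and equate them. Recall from Lemma~\ref{surface} that $R_i$ is an orientable surface with boundary that retracts to $\bar{S_i}$, and that $R_i$ has exactly $h_i$ boundary components. For any compact orientable surface with $h_i$ boundary circles and genus $g_i$, the Euler characteristic is $\chi(R_i) = 2 - 2g_i - h_i$. The asserted formula $2 - 2g_i = V_i - E_i + h_i + 1$ rearranges to $\chi(R_i) = 2 - 2g_i - h_i = V_i - E_i + 1$, so the whole statement is equivalent to showing $\chi(R_i) = V_i - E_i + 1$.

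First I would exploit the retraction of $R_i$ onto $\bar{S_i}$: since a retraction is a homotopy equivalence here, $\chi(R_i) = \chi(\bar{S_i})$. Now $\bar{S_i}$ is the closure of a single $xy$-face glued to the companion graph $\bar{\Gamma}$, so it has a natural CW-structure. Its cells are exactly the one $xy$-face of $\bar{S_i}$ (contributing $+1$), together with the vertices and edges of the subgraph of $\bar{\Gamma}$ actually visited by the attaching map $\rho_i$ of that face — and these number $V_i$ vertices and $E_i$ edges by the definition of $V_i, E_i$ in the statement. Hence $\chi(\bar{S_i}) = V_i - E_i + 1$, which is precisely what we need.

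The step I expect to require the most care is justifying that $\bar{S_i}$ carries a CW-structure with exactly one $2$-cell and with $1$-skeleton equal to the subgraph of $\bar{\Gamma}$ visited by $\rho_i$. The subtlety, flagged in the text preceding Lemma~\ref{surface}, is that $\bar{S_i}$ need not be a sub-surface: the attaching map $\rho_i$ may traverse some edges twice (those shown in red or green), so the boundary of the attached disc can be identified with itself in complicated ways, and the resulting space may be singular along $\mathcal{G}$. Nonetheless this does not obstruct the Euler characteristic count: $\chi$ is additive over cells regardless of how the attaching map folds the boundary of the $2$-cell, so as long as we correctly identify the $1$-skeleton as the visited subgraph (each visited edge and vertex counted once, however many times $\rho_i$ runs over it), the count $V_i - E_i + 1$ is valid. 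I would make this precise by observing that $\bar{S_i}$ is obtained from the visited subgraph by attaching a single $2$-cell via $\rho_i$, so $\chi(\bar{S_i}) = \chi(\text{visited subgraph}) + 1 = (V_i - E_i) + 1$, using that the graph has Euler characteristic $V_i - E_i$.

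Combining the two computations gives $2 - 2g_i - h_i = \chi(R_i) = \chi(\bar{S_i}) = V_i - E_i + 1$, and rearranging yields $2 - 2g_i = V_i - E_i + h_i + 1$, as claimed. The only remaining point, worth stating explicitly, is that $g_i$ is well-defined as the genus of an orientable surface with boundary via this $\chi$ formula precisely because Lemma~\ref{surface} guarantees $R_i$ is orientable with $h_i$ boundary components, so no non-orientable correction term is needed.
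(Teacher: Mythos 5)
Your proof is correct and takes essentially the same route as the paper: both arguments rest on Lemma~\ref{surface} together with an Euler characteristic count of $\bar{S_i}$ as a single $2$-cell attached to the visited subgraph, giving $V_i - E_i + 1$. The only difference is cosmetic --- the paper caps the $h_i$ boundary circles of $R_i$ with discs and compares two closed surfaces of Euler characteristic $2-2g_i$, whereas you leave the boundary intact and invoke $\chi(R_i) = 2 - 2g_i - h_i$ directly.
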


\begin{proof}
  By Lemma~\ref{surface},  filling the $h_i$ holes in $R_i$ with discs gives a closed orientable surface of genus $g_i$ which is homotopic to  $\bar{S_i}$  with $h_i$ discs attached along circuits in its $1$-skeleton.   
Hence the Euler characteristic $2-2g_i$ of $R_i$ is the same as that of $\bar{S_i}$ 
with the $h_i$ discs attached, namely  $V_i  - E_i + h_i + 1$.
\end{proof}

\begin{Questions}\label{open qus} Higman asked the following questions concerning $k$-januarials of simple type.  For a given $k$, what are the possible values for and interrelationships between  $g_1$,  $g_2$ and $h$?  Are there arbitrarily large values of $k$ for which there exist  examples with $h=1$?  
How large can $h$ be, for given $k$? 
Similar questions can be asked also about januarials that are not of simple type. 
 \end{Questions}
${}$\\[-40pt]

\section{\texorpdfstring{Januarials from $\PSL(2,q)$}{Januarials from PSL(2,q)}} \label{finding}

\subsection{\texorpdfstring{$\PGL(2,q)$, $\PSL(2,q)$ and the classical modular group}{PGL(2,q), PSL(2,q) and the classical modular group}}
\label{findingfirst}

The \emph{projective linear group} $\PGL(n,\mathbb{F})$ over a field $\mathbb{F}$ 
is the quotient $\GL(n,\mathbb{F})/Z$ of the group of invertible $n \times n$ matrices 
by its centre $Z = \set{\, a I_n \mid a \in \mathbb{F} \ssm \set{0}}$. 
Its subgroup, the \emph{projective special linear group} $\PSL(n,\mathbb{F})$, is the 
quotient of $\SL(n,\mathbb{F})$, the group of all $n \times n$ matrices  over  $\mathbb{F}$ 
of determinant one, by its subgroup of all scalar matrices of determinant one.

There is a natural isomorphism between $\PGL(2,\mathbb{F})$  and a group of M\"{o}bius 
transformations, under which 
 the matrix  
$\, \begin{pmatrix} 
 a & b \\ 
 c & d
 \end{pmatrix} \, $ \ 
corresponds to the transformation $\, {\ds z \mapsto \frac{az +c}{bz+d}}\, ,$  
when multiplication of transformations is read from left to right. 
This gives actions of $\PGL(2,\mathbb{F})$ and $\PSL(2,\mathbb{F})$ on the projective 
line  $\PL(\mathbb{F}) = \mathbb{F} \cup \set{\infty}$.  
Also if $\mathbb{F}$ is finite, of order $q$, then $\PGL(2,\mathbb{F})$ and $\PSL(2,\mathbb{F})$ 
are denoted by $\PGL(2,q)$ and $\PSL(2,q)$.

A search for 3-januarials may begin with the \emph{classical modular group} 
$$
\PSL(2,\Z) \ = \  \langle \, x, y \, | \,    x^2 =y^3=1 \rangle 
$$ 
which acts on $\Q \cup \set{\infty}$ by M\"obius transformations 
with $x: z \mapsto -1/z$ and $y: z \mapsto (z-1)/z$.  
Notice that $x y : z \mapsto z +1$. 
A portion of the resulting coset diagram is shown in Figure~\ref{PSLZ coset diagram}.

\begin{figure}[ht]
\psfrag{x}{}
\psfrag{t}{$t: z \mapsto 1/z$} 
\psfrag{0}{\tiny{$0$}}
\psfrag{1}{\tiny{$1$}}
\psfrag{2}{\tiny{$2$}}
\psfrag{3}{\tiny{$3$}}
\psfrag{4}{\tiny{$4$}}
\psfrag{5}{\tiny{$5$}}
\psfrag{-1}{\tiny{$-1$}}
\psfrag{-2}{\tiny{$-2$}}
\psfrag{-3}{\tiny{$-3$}}
\psfrag{-4}{\tiny{$-4$}}
\psfrag{1/5}{\tiny{$1/5$}}
\psfrag{5/4}{\tiny{$5/4$}}
\psfrag{1/4}{\tiny{$1/4$}}
\psfrag{-34}{\tiny{$-3/4$}}
\psfrag{1/3}{\tiny{$1/3$}}
\psfrag{4/3}{\tiny{$4/3$}}
\psfrag{7/3}{\tiny{$7/3$}}
\psfrag{-23}{\tiny{$-2/3$}}
\psfrag{-53}{\tiny{$-5/3$}}
\psfrag{3/8}{\tiny{$3/8$}}
\psfrag{-25}{\tiny{$-2/5$}}
\psfrag{3/5}{\tiny{$3/5$}}
\psfrag{8/5}{\tiny{$8/5$}}
\psfrag{5/7}{\tiny{$5/7$}}
\psfrag{-5/2}{\tiny{$-5/2$}}
\psfrag{-32}{\tiny{$-3/2$}}
\psfrag{-1/2}{\tiny{$-1/2$}}
\psfrag{5/2}{\tiny{$5/2$}}
\psfrag{3/2}{\tiny{$3/2$}}
\psfrag{1/2}{\tiny{$1/2$}}
\psfrag{7/2}{\tiny{$7/2$}}
\psfrag{-3/5}{\tiny{$-3/5$}}
\psfrag{2/5}{\tiny{$2/5$}}
\psfrag{7/5}{\tiny{$7/5$}}
\psfrag{5/8}{\tiny{$5/8$}}
\psfrag{8/3}{\tiny{$8/3$}}
\psfrag{5/3}{\tiny{$5/3$}}
\psfrag{2/3}{\tiny{$2/3$}}
\psfrag{-13}{\tiny{$-1/3$}}
\psfrag{-4/3}{\tiny{$-4/3$}}
\psfrag{2/7}{\tiny{$2/7$}}
\psfrag{3/7}{\tiny{$3/7$}}
\psfrag{7/4}{\tiny{$7/4$}}
\psfrag{3/4}{\tiny{$3/4$}}
\psfrag{-14}{\tiny{$-1/4$}}
\psfrag{4/5}{\tiny{$4/5$}}
\psfrag{4/7}{\tiny{$4/7$}}
\psfrag{i}{\tiny{$\infty$}}
\centerline{\epsfig{file=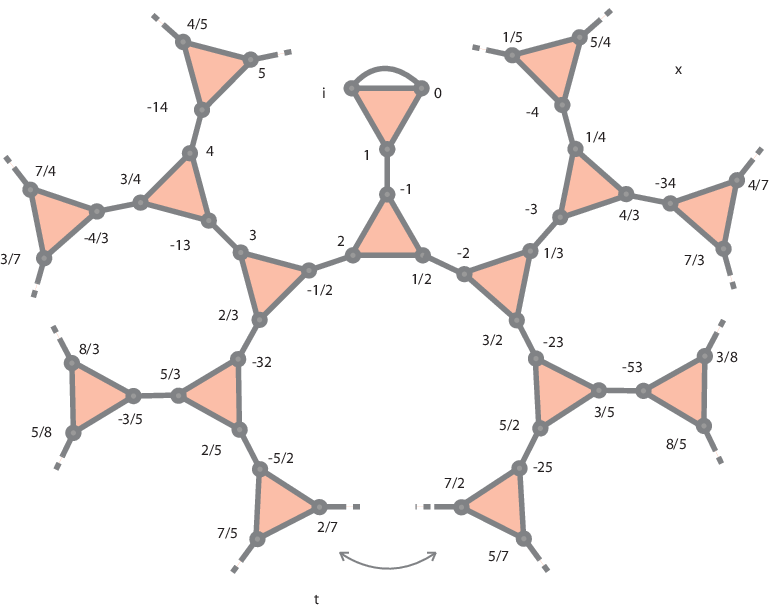}} 
\caption{The coset graph arising from the action of  the modular group 
$\PSL(2,\Z)  =  \langle \, x,y \, | \,  x^2 = y^3 = 1 \, \rangle$ on $\mathbb{Q} 
\cup \set{\infty}$ via $x : z \mapsto -1/z$ and $y : z \mapsto (z-1)/z$.} 
\label{PSLZ coset diagram}
\end{figure}

Suppose $p$ is a prime. 
Then the group $\PSL(2,p)$ is a homomorphic image of  
$$
\Delta(2,3,p)  \  = \  \langle \,  x,y \,  \mid \,  x^2 = y^3 = (xy)^p =1 \, \rangle, 
$$  
whereby $\Delta(2,3,p)$ acts on $\F_p \cup \set{\infty}$ 
via $x : z \mapsto -1/z$ and $y : z \mapsto (z-1)/z$, with product $x y : z \mapsto z +1$.  
The resulting coset diagrams $\Gamma$ are quotients of the diagram in Figure~\ref{PSLZ coset diagram};   
for example, the diagrams for  $\PSL(2,{13})$ and $\PSL(2,{17})$ are shown in 
Figure~\ref{13 and 17 coset diagram}.   
But, these coset diagrams do not immediately yield  januarials,  
since the orbits of  $\langle xy \rangle$ have lengths $1$ and $p$ rather than both $(p+1)/2$. 

\begin{figure}[ht]
\psfrag{x}{}
\psfrag{t}{$t: z \mapsto 1/z$} 
\psfrag{i}{\tiny{$\infty$}}
\psfrag{0}{\tiny{$0$}}
\psfrag{1}{\tiny{$1$}}
\psfrag{2}{\tiny{$2$}}
\psfrag{3}{\tiny{$3$}}
\psfrag{4}{\tiny{$4$}}
\psfrag{5}{\tiny{$5$}}
\psfrag{6}{\tiny{$6$}}
\psfrag{7}{\tiny{$7$}}
\psfrag{8}{\tiny{$8$}}
\psfrag{9}{\tiny{$9$}}
\psfrag{10}{\tiny{$10$}}
\psfrag{11}{\tiny{$11$}}
\psfrag{12}{\tiny{$12$}}
\centerline{\epsfig{file=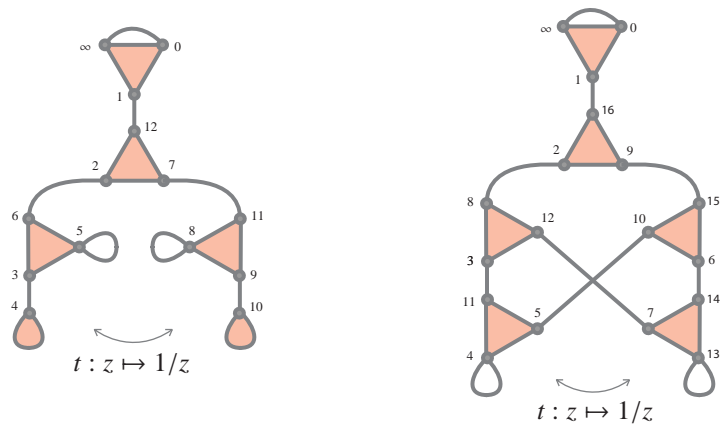}} 
\caption{Coset graphs arising from actions of $\Delta(2,3,13)$ 
on $\F_{13} \cup \set{\infty}$ and $\Delta(2,3,17)$ 
on $\F_{17} \cup \set{\infty}$, 
both via  $x : z \mapsto -1/z$ and $y : z \mapsto (z-1)/z$.} 
\label{13 and 17 coset diagram}
\end{figure}

\subsection{Associates} \label{associates}

Here is a potential remedy for the failure of the   coset diagrams constructed above from $\PSL(2,p)$ to produce januarials.  It applies in the general setting of a finite 
group $G$ acting on a set $S$ and containing elements $x$ and $y$ satisfying 
$x^2  =  y^k = (x y)^l =1$ for some  $l \in \Z$.  
Let $\Gamma$ be the resulting coset graph for the action of $\Delta(2,k,l)$ on $S$, 
via $G$, with respect to the generating set $\set{x,y}$.
 
Suppose $G$ has an element $t$ of order $2$ with the property that 
$t^{-1}xt=x^{-1}$ ($= x$) and $t^{-1}yt=y^{-1}$. 
Conjugation by such an element $t$ reverses every cycle of $y$ and preserves  
every cycle of the involution $x$, and hence $t$ induces a reflection of the 
coset graph $\Gamma$. 

Note that $(xt)^2  =  x(txt) = xx = 1$, which allows us to consider the pair $(xt,y)$ in place of $(x,y)$. 
If $l'$ is the order of $xt\, y$, then we have an action of $\Delta(2,3,l')$ on $S$ via 
$\langle xt,y\rangle$.  
The  resulting coset graph  $\Gamma'$  is called an \emph{associate} of $\Gamma$. 
This graph also admits a reflection via the same involution $t$, since 
$t^{-1}(xt)t = x^{-1}t = xt = (xt)^{-1}$ (and $t^{-1}yt=y^{-1}$). 
For more details about the correspondence $(x,y,t) \mapsto (xt,y,t)$, see \cite{JTh}. 
The associate graph $\Gamma'$ gives a new candidate for a januarial.  

\begin{Examples}  
When $G$ is $\PSL(2,p)$ for some prime $p \equiv 1$ mod $4$, 
and $x : z \mapsto -1/z$ and $y : z \mapsto (z-1)/z$, we can take $t$ to be the 
transformation $z \mapsto 1/z$, which has order $2$ and satisfies 
$t^{-1}xt=x^{-1}$ and $t^{-1}yt=y^{-1}$, as required.    
[The condition $p \equiv 1$ mod $4$ ensures that $-1$ is a square mod $p$, 
so that the transformation $t$ (and hence also $xt$) lies in $\PSL(2,p)$.]   
In this case, $xt$ is  the transformation $z \mapsto -z$. 
Hence, in particular, the associates of the coset graphs in the cases $p = 13$ and $17$ 
from Figure~\ref{13 and 17 coset diagram} are precisely those in Figures~\ref{13 associate} 
and \ref{17 associate}.  
The transformation $xty : z \mapsto (z+1)/z$ has two cycles of equal length, 
and so in both cases the associates are januarials --- specifically those depicted 
in Figure~\ref{sphere and torus}. 
\end{Examples}

Like $\Gamma$, the associate can fail to yield a januarial if the sizes of the $\langle xy \rangle$-orbits are not the requisite $\abs{S}/2$,  but  it succeeds in many cases.   In  Section~\ref{when the associate succeeds} we will explore when it can be successfully applied to the examples from $\PSL(2,p)$.  But first we need the following study of conjugacy classes in $\PGL(2,q)$.

\subsection{\texorpdfstring{Classifying conjugacy classes in $\PGL(2,q)$}{Classifying conjugacy classes in PGL(2,q)}}
 
Some of the details of the analysis in this section are similar to that carried 
out by Macbeath in \cite{Mac}.  
 
Let $q$ be any odd prime-power greater than $3$, say $q = p^s$.
For $M = \begin{pmatrix}
  a & b \\
  c & d
\end{pmatrix}  \in \GL(2,q)$, 
we may define   
$$ 
\theta (M) \ = \ 
\frac{(\tr\, M)^2}{\det\, M\,} \  = \  \frac{(a+d)^2}{ad-bc}.
$$ 

The characteristic polynomial of  $M$   
is $\,\det(xI_2 - M) = x^2 - {\rm tr}(M)x + \det(M),\,$ 
and ${\rm tr}(M)$ and $\det(M)$, and therefore also $\theta(M)$,  
are  invariant under conjugacy  within $\GL(2,q)$.    
Since $\theta(M)$ is invariant under scalar multiplication of $M$, it follows that 
this gives us a well-defined function $\theta : \PGL(2,q) \to \mathbb{F}_q$ 
that is constant on conjugacy classes of $\PGL(2,q)$. 

In fact, the function $\theta$ parametrises the conjugacy classes of $\PGL(2,q)$, as follows.  

\begin{prop} \label{classification}
Let  $g$ and $h$ be elements of $\PGL(2,q)$. 
If $\,\theta(g) = \theta(h) \not\in \{0,4\},\,$ then $g$ and $h$ are   conjugate in $\PGL(2,q)$.
In the exceptional cases, there are precisely two conjugacy classes on which $\theta = 0$, 
namely the class of involutions in $\PSL(2,q)$ and the class of involutions 
in $\PGL(2,q) \ssm \PSL(2,q),$
and two classes on which $\theta = 4$, namely the 
class containing the identity element and the class of the transformation $z \mapsto z+1$. \\[-18pt] 
\end{prop}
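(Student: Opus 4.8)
The plan is to show that $\theta$ separates conjugacy classes of $\PGL(2,q)$ by relating $\theta(g)$ to the eigenvalue data of a representative matrix. First I would fix $g \in \PGL(2,q)$ with representative $M \in \GL(2,q)$, and observe that $\theta(M) = (\tr M)^2/\det M$ depends only on the ratio $\lambda/\mu$ of the two eigenvalues $\lambda,\mu$ of $M$ (in $\F_q$ or in the quadratic extension $\F_{q^2}$, according to whether the characteristic polynomial splits), since $\theta(M) = (\lambda+\mu)^2/(\lambda\mu) = \lambda/\mu + \mu/\lambda + 2$. Writing $\nu = \lambda/\mu$, we get $\theta = \nu + \nu^{-1} + 2$. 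The map $\nu \mapsto \nu + \nu^{-1} + 2$ is two-to-one away from its ramification points: solving $\nu + \nu^{-1} + 2 = \tau$ for $\nu$ gives a quadratic $\nu^2 - (\tau - 2)\nu + 1 = 0$, whose two roots are reciprocal, so they correspond to the same unordered eigenvalue ratio. The ramification (repeated-root) cases are exactly $\tau = 0$ (giving $\nu = -1$, i.e. $\lambda = -\mu$, the involutions) and $\tau = 4$ (giving $\nu = 1$, i.e. $\lambda = \mu$, the parabolic/identity cases) — which is precisely why $0$ and $4$ are the exceptional values.

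Next I would translate this eigenvalue-ratio data into conjugacy in $\PGL(2,q)$. The key principle is that two elements of $\PGL(2,q)$ are conjugate if and only if their representative matrices in $\GL(2,q)$ are conjugate up to scalar, and that (over a field) a $2\times 2$ matrix is determined up to conjugacy by its rational canonical form. When $\theta(g) \notin \{0,4\}$, the characteristic polynomial has distinct roots, so $M$ is diagonalizable (over $\F_q$ if the roots lie in $\F_q$, otherwise over $\F_{q^2}$), and up to scalar it is conjugate to a companion matrix determined by the ratio $\nu = \lambda/\mu$. I would argue that the common value $\theta(g) = \theta(h)$ forces the same unordered eigenvalue ratio $\{\nu, \nu^{-1}\}$, and hence the same projective canonical form, so $g$ and $h$ are conjugate in $\PGL(2,q)$ — here I should be careful to handle the split case (distinct eigenvalues in $\F_q$, a diagonalizable ``hyperbolic'' element) and the non-split case (eigenvalues in $\F_{q^2}\setminus\F_q$, an ``elliptic'' element) uniformly, noting that diagonalizability with distinct eigenvalues plus matching ratio is enough.

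For the exceptional cases, I would treat $\theta = 0$ and $\theta = 4$ separately and count classes directly. For $\theta = 0$ we have $\tr M = 0$ but $\det M \neq 0$, so $M^2$ is scalar and $g$ is an involution in $\PGL(2,q)$; the two classes arise according to whether $g \in \PSL(2,q)$ or not, which is detected by whether $-\det M$ is a square in $\F_q^\times$ (equivalently whether $M$ can be scaled to have square determinant). I would verify these are two genuinely distinct classes, e.g. by exhibiting the representatives $z\mapsto -z$ (in $\PSL$) and $z \mapsto 1/z$ (outside $\PSL$ when $q \equiv 3 \bmod 4$, with the appropriate swap otherwise), and checking each class is a single orbit. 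For $\theta = 4$ we have $(\tr M)^2 = 4\det M$, so $M$ has a repeated eigenvalue; scaling to make that eigenvalue $1$, either $M$ is already scalar (giving the identity class in $\PGL(2,q)$) or $M$ is a nontrivial Jordan block, all of which are conjugate in $\PGL(2,q)$ to $z \mapsto z+1$.

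The main obstacle I anticipate is the careful bookkeeping of scalars when passing between $\GL(2,q)$, $\SL(2,q)$, and the projective quotients — in particular, distinguishing the two involution classes at $\theta = 0$ requires pinning down exactly when a trace-zero matrix, rescaled, has square determinant, and this is where the distinction between $\PSL$ and $\PGL$ lives. The non-split (elliptic) case also demands a little care to ensure that matching $\theta$-value really does yield conjugacy over $\F_q$ and not merely over the extension $\F_{q^2}$; the relevant fact is that two elements of $\GL(2,q)$ with the same irreducible characteristic polynomial are already conjugate in $\GL(2,q)$, which reduces the elliptic case to checking that equal $\theta$ (together with a fixed projective normalization) forces equal characteristic polynomial up to scalar.
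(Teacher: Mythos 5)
Your treatment of the generic case $\theta(g)\notin\set{0,4}$ is correct and is in essence the paper's own argument: since $\theta\neq 0$ the trace is nonzero, so a representative matrix can be scaled to have trace $1$, after which its characteristic polynomial $x^2-x+1/\theta(g)$ is pinned down by $\theta(g)$, and rational canonical form (every non-scalar $2\times 2$ matrix is conjugate to the companion matrix of its characteristic polynomial, whether or not that polynomial is irreducible) finishes the proof. Your eigenvalue-ratio discussion is a pleasant way of seeing why $0$ and $4$ are precisely the exceptional values, but the step that actually closes the argument is this same trace-normalized companion-matrix reduction. Note that the paper proves only this generic case; your attempt to verify the exceptional cases goes beyond it, and your $\theta=4$ analysis is fine.

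However, your $\theta=0$ analysis contains a genuine error: the transformations $z\mapsto -z$ and $z\mapsto 1/z$ are conjugate in $\PGL(2,q)$, so they cannot serve as representatives of the two distinct involution classes. Their matrices $\begin{pmatrix} -1 & 0 \\ 0 & 1 \end{pmatrix}$ and $\begin{pmatrix} 0 & 1 \\ 1 & 0 \end{pmatrix}$ both have characteristic polynomial $x^2-1$, hence are conjugate already in $\GL(2,q)$; in particular both have determinant $-1$, so for every odd $q$ they lie in the same coset of $\PSL(2,q)$ (both inside it when $q\equiv 1$ mod $4$, both outside when $q\equiv 3$ mod $4$). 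The two classes are instead the ``split'' involutions (two fixed points on $\PL(\F_q)$, representative $z\mapsto -z$, determinant $-1$ modulo squares) and the ``non-split'' ones (no rational fixed points, representative $z\mapsto \epsilon/z$ with $\epsilon$ a non-square, determinant $-\epsilon$ modulo squares); since $-1$ and $-\epsilon$ differ by the non-square $\epsilon$, exactly one of these classes lies in $\PSL(2,q)$, as the proposition asserts. Relatedly, you conflate two different invariants: membership of an involution in $\PSL(2,q)$ is detected by $\det M$ being a square (your parenthetical), whereas ``$-\det M$ is a square'' detects rationality of the fixed points; these induce the same two-part partition of the involutions but opposite labellings when $q\equiv 3$ mod $4$. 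With these corrections your scheme does go through: trace-zero matrices up to scalars are classified by $\det$ modulo squares, giving exactly two classes at $\theta=0$.
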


This proposition can be proved using rational canonical forms, but also we can give a direct proof for the 
generic case.  

\begin{proof}[Proof for the case where $\theta(g) \notin \set{0,4}$.]
Suppose the transformation $g \in \PGL(2, q)$ is induced by the matrix $M \in \GL(2,q)$, 
with  $\tr(M) \neq 0$.  Then we can choose a vector $u \in \F_q^{\,2}$ 
such that $u$ and $u M$ are linearly independent over $\F_q$.  
The matrix for $g$ with respect to the basis $\set{ u , u M }$ is the of the form   
$\begin{pmatrix}
  0 & 1 \\
  \ast & \ast
\end{pmatrix},$
but since the trace is non-zero and a conjugacy invariant, we can change the basis 
if necessary, so that the matrix for $g$ has entry $1$ in the lower-right corner.  
The matrix for $g$ then becomes 
$$
M'  \ =  \ \begin{pmatrix}
  0 & 1 \\
  - \Delta & 1
\end{pmatrix}, \\[+4pt] 
$$
where $\Delta$ is the determinant.  
But then $\theta(g) = \theta(M') = 1/\Delta$, so $\Delta = 1/\theta(g)$, 
and it follows that $\theta(g)$ determines the matrix. 
Since matrices representing the same linear transformation  with respect to different 
bases are conjugate within $\GL(2,q)$, we find that $\theta(g)$ determines the 
conjugacy class of $g$. 
\end{proof}

The utility of the parameter $\theta$ is enhanced by the following lemma, 
which gives a number of cases in which the order of an element $y \in \PGL(2,q)$ determines $\theta(y)$.

Elements with trace $0$ give involutions in $\PGL(2,q)$, and conversely, 
while elements with trace $-1$ and determinant $1$ give elements 
of order $3$ in $\PGL(2,q)$, and parabolic elements   (which are the conjugates 
of the matrix $\begin{pmatrix}
  1 & 0 \\
  1 & 1
\end{pmatrix}$, or equivalently, the elements 
with trace $2$ and determinant $1$), give elements of order $p$ in $\PGL(2,q)$. 
We also note that every element of order $(q+1)/2$ in $\PGL(2,p)$ is the square of 
an element of order $q+1$ in $\PGL(2,q)$, and hence lies in $\PSL(2,q)$   
and is the product of two cycles of length $(q+1)/2$ in the natural action of $\PGL(2,q)$ on $\mathbb{F}_q \cup \set{\infty}$.

\begin{lemma} \label{theta from orders}
If $y$ is an element of order $1$, $2$, $3$, $4$ or $6$ in $\PGL(2,q)$, 
then $\theta(y) =  4$, $0$, $1$, $2$ or $3$, respectively. 
Also if $y$ has order $p$ (the prime divisor of $q$) then $\theta(y) = 4$. \\[-18pt]
\end{lemma}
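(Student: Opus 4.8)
The plan is to reduce the entire statement to one computation with eigenvalues. Since $\theta$ is invariant under both conjugacy and scalar multiplication (as noted in the run-up to Proposition~\ref{classification}), I may compute $\theta(y)$ from any matrix $M \in \GL(2,q)$ inducing $y$, and in fact directly from the eigenvalues $\lambda,\mu \in \overline{\F}_q$ of such an $M$. These are nonzero because $M$ is invertible, so on setting $\zeta = \lambda/\mu$ one has
\[
\theta(M) \ = \ \frac{(\lambda+\mu)^2}{\lambda\mu} \ = \ \zeta + \zeta^{-1} + 2 .
\]
The second ingredient is that the order of $y=[M]$ in $\PGL(2,q)$ is governed by $\zeta$: when $M$ is diagonalisable, $M^n$ is scalar exactly when $\lambda^n=\mu^n$, i.e.\ $\zeta^n=1$, so the order of $y$ is the multiplicative order of $\zeta$; when instead $M$ is a single non-scalar Jordan block, $\lambda=\mu$, so $\zeta=1$ and $y$ is parabolic of order $p$ with $\theta(y)=4$.

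With this in hand, each value falls out by substituting a primitive $n$-th root of unity for $\zeta$ and evaluating $\zeta+\zeta^{-1}$:
\[
\begin{array}{c|ccccc}
\text{order of } y & 1 & 2 & 3 & 4 & 6 \\ \hline
\zeta + \zeta^{-1} & 2 & -2 & -1 & 0 & 1 \\
\theta(y) & 4 & 0 & 1 & 2 & 3
\end{array}
\]
using $1+\zeta+\zeta^2=0$ for order $3$, $\zeta^2=-1$ for order $4$, and $\zeta^2-\zeta+1=0$ (whence $\zeta+\zeta^{-1}=1$) for order $6$; the order-$p$ value comes from the parabolic case. Each entry lies in the prime field, so it is unambiguous in $\F_q$ even when $\lambda,\mu$ lie in $\F_{q^2}\setminus\F_q$. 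The cases of orders $1,2,3$ and $p$ also agree with the trace/determinant descriptions recorded immediately before the lemma, so those could alternatively be quoted directly; only orders $4$ and $6$ genuinely require the eigenvalue computation.

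The one point that demands care --- and the only place the argument is not purely formal --- is the defining characteristic. A semisimple element of order $n$ exists only when $p\nmid n$, so I must check that the listed values remain correct, or become vacuous, whenever $p$ divides one of $2,3,4,6$. Since $q$ is odd we have $p\neq 2$, so orders $2$ and $4$ are unaffected, and an order-$6$ element forces $p\neq 2,3$, so that row is vacuous in precisely the characteristics where it could otherwise fail. The genuinely interesting collision is $p=3$: there an element of order $3$ is necessarily parabolic and hence has $\theta=4$, but $4=1$ in $\F_3$, so this is consistent with the value $1$ listed for order $3$. Confirming this consistency --- together with the fact that parabolic elements account for all elements of order divisible by $p$, so that the diagonalisable/Jordan dichotomy above is exhaustive --- is the main thing to pin down; the remainder is the routine substitution displayed above.
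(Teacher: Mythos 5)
Your proof is correct, but it takes a genuinely different route from the paper's. The paper never leaves $\F_q$: it dispatches orders $1$, $2$, $3$ by noting that a representative matrix $M$ is scalar, has trace $0$, or has minimum polynomial $x^2+x+1$ respectively, and then bootstraps orders $4$ and $6$ from orders $2$ and $3$ via the identity $\tr(M^2) = (\tr M)^2 - 2\det M$: for order $4$, $M^2$ is an involution so $\tr(M^2)=0$, giving $\theta(y)=2$; for order $6$, $M^2$ has order $3$ so $(\tr(M^2))^2=\det(M^2)$, giving $(\theta(y)-2)^2=1$ and hence $\theta(y)=3$ once $\theta(y)=1$ is excluded. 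You instead pass to $\overline{\F}_q$, diagonalise, and read every case off the single formula $\theta(y)=\zeta+\zeta^{-1}+2$ where $\zeta$ is the ratio of eigenvalues, together with the dichotomy (diagonalisable of order equal to the order of $\zeta$, versus Jordan block of order $p$). What your approach buys: one uniform computation valid for \emph{every} order $n$ coprime to $p$ --- indeed it is exactly the parametrisation the paper itself adopts later, in the discussion preceding Lemma~\ref{necessities}, where $\theta(g)=\rho+\rho^{-1}+2$ for a root of unity $\rho$ --- plus an explicit audit of the characteristic collisions (order $3$ when $p=3$ forces the parabolic case, consistently since $4=1$ in $\F_3$; order $6$ is vacuous when $p=3$), which the paper handles only implicitly: in characteristic $3$ the parabolic minimal polynomial $(x-1)^2$ coincides with $x^2+x+1$, so its order-$3$ argument silently survives. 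What the paper's approach buys: it avoids the algebraic closure and the Jordan-form dichotomy altogether, and its squaring trick is shorter for the two orders ($4$ and $6$) that genuinely require work.
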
 

\begin{proof} 
Suppose $y$ is induced by the element $M = \begin{pmatrix}
  a & b \\
  c & d
\end{pmatrix}  \in \GL(2,q)$. 
Then the first three cases are easy consequences of the respective observations 
that in those cases, $M$ is scalar, or $M$ has trace $0$, or $M$ has minimum polynomial 
$x^2+x+1$.

For the next two cases, we note that 
$\, M^2 = \begin{pmatrix}
  a^2+bc & b(a+d) \\
  c(a+d) & d^2+bc
\end{pmatrix} \, $
and therefore 
$$\tr(M^2) \ = \ a^2+d^2+2bc \ = \ (a+d)^2 - 2(ad-bc) 
\ = \ (\tr\,M)^2 - 2\,\det\,M.$$

If $g$ has order $4$, then $M^2$ has order $2$, 
and so $0 = \tr(M^2) = (\tr\,M)^2 - 2\,\det\,M$, 
which gives $\theta(y) = \theta(M) = (\tr\, M)^2/\det\, M = 2$. 
Similarly, if $y$ has order $6$, then since $M^2$ induces an element of order 3 
in $\PGL(2,q)$ we know that 
$$
((\tr\,M)^2 - 2\,\det\,M)^2 \ = \ (\tr(M^2))^2 \ = \ \det(M^2) \ = \ (\det\,M)^2, 
$$ 
and therefore $(\theta(M)-2)^2 = 1$. But since $y$ does not have order $3$, 
we know that $\theta(M) \ne 1$, and so $\theta(M)-2 = 1$, which gives $\theta(y) = \theta(M) = 3$.

Finally, if $y$ has order $p$, then $y$ is parabolic and therefore induced by 
some conjugate of the matrix $\begin{pmatrix}
  1 & 0 \\
  1 & 1
\end{pmatrix}$, 
which implies that $\theta(y) = (1+1)^2/1 = 4$. 
\end{proof}

\subsection{\texorpdfstring{How many $3$-januarials arise from $\PSL(2,p)$?}{How many 3-januarials arise from PSL(2,p)?}} 

We can now proceed further, to consider $3$-januarials.  
Let $\phi$ be Euler's totient function --- that is, let $\phi(n)$ be the number of 
integers in $\set{1, \ldots, n}$ that are coprime to $n$.  
 
\begin{lemma}
The number of conjugacy classes of elements in $\PGL(2,q)$ of
order $(q+1)/2$ is $\frac{1}{2}\phi((q+1)/2)$. Moreover, if $z$ is any element of 
order  $(q+1)/2$ in $\PGL(2,q)$, then every one of these conjugacy classes 
intersects the subgroup generated by $z$ in $\set{z^i,z^{-i}}$ for exactly 
one $i$ coprime to $(q+1)/2$. \\[-18pt] 
\end{lemma}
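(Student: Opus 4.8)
The plan is to run everything through the invariant $\theta$ of Proposition~\ref{classification}, computed from the eigenvalues of an order-$n$ element, where I abbreviate $n = (q+1)/2$. First I would record two facts about $n$. Since $q = p^s$ gives $q+1 \equiv 1 \pmod{p}$, we have $\gcd(n,p)=1$, so any element of order $n$ is semisimple and diagonalisable over $\mathbb{F}_{q^2}$. Next, $n \nmid q-1$: the only divisor of $q-1$ exceeding $(q-1)/2$ is $q-1$ itself, and $n=(q+1)/2 > (q-1)/2$ would force $n = q-1$, i.e.\ $q=3$, which is excluded. Hence an order-$n$ element cannot have both eigenvalues in $\mathbb{F}_q$ (that would make the eigenvalue ratio a primitive $n$-th root of unity in $\mathbb{F}_q^{\times}$, forcing $n \mid q-1$); its eigenvalues form a Galois-conjugate pair $\alpha, \alpha^q \in \mathbb{F}_{q^2} \setminus \mathbb{F}_q$, and the ratio $\eta := \alpha^{1-q}$ is a primitive $n$-th root of unity in the norm-one (cyclic, order $q+1$) subgroup $\mu_{q+1}$ of $\mathbb{F}_{q^2}^{\times}$.

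The key computation I would then carry out is that, for a diagonalisable element with eigenvalues $\alpha, \beta$ and ratio $\eta = \alpha/\beta$,
\[
\theta \ = \ \frac{(\alpha + \beta)^2}{\alpha \beta} \ = \ \eta + \eta^{-1} + 2 .
\]
For $z$ of order $n$, with eigenvalue ratio $\eta_0$ a primitive $n$-th root of unity, this gives $\theta(z^i) = \eta_0^{\,i} + \eta_0^{-i} + 2$. When $n > 2$ (so for every $q > 3$) these never equal $0$ or $4$, since $\theta = 0$ and $\theta = 4$ force $\eta = -1$ and $\eta = 1$, of orders $2$ and $1$ respectively. Thus Proposition~\ref{classification} applies on the nose: $z^i$ and $z^j$ are conjugate in $\PGL(2,q)$ iff $\theta(z^i)=\theta(z^j)$, and by the elementary equivalence $s + s^{-1} = t + t^{-1} \Leftrightarrow t \in \set{s, s^{-1}}$ this happens exactly when $j \equiv \pm i \pmod{n}$.

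Finally I would show that the powers of $z$ already exhaust all order-$n$ elements up to conjugacy, and then count. Every primitive $n$-th root of unity in $\mu_{q+1}$ lies in its unique cyclic subgroup $\mu_n$ of order $n$, namely $\langle \eta_0 \rangle$; hence the ratio $\eta$ of an arbitrary order-$n$ element $g$ equals $\eta_0^{\,i}$ for some $i$ coprime to $n$, giving $\theta(g) = \theta(z^i)$ and so $g \sim z^i$. Therefore the conjugacy classes of order-$n$ elements are precisely those meeting $\langle z \rangle$, each such class meeting $\langle z \rangle$ exactly in $\set{z^i, z^{-i}}$ (a genuine two-element set, since $\gcd(i,n)=1$ and $n>2$ prevent $n \mid 2i$). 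The $\phi(n)$ generators of $\langle z \rangle$ split into $\phi(n)/2$ such pairs, one per class, yielding $\tfrac{1}{2}\phi((q+1)/2)$ classes. I expect the main obstacle to be the exhaustion step --- ruling out the split case and confirming that every order-$n$ element's eigenvalue ratio lands in the single cyclic group $\langle \eta_0 \rangle$, so that no class escapes $\langle z \rangle$; once that is in place the remaining counting is the $\theta$-bookkeeping above.
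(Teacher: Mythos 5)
Your proof is correct and follows essentially the same route as the paper's: both diagonalise an order-$(q+1)/2$ element over $\mathbb{F}_{q^2}$, identify its eigenvalue data with powers of a fixed root of unity attached to $q+1$, and separate the resulting classes $\set{z^i,z^{-i}}$ by the trace-based invariant (the paper compares traces $\lambda^i+\lambda^{-i}$ of determinant-one representatives, which is exactly your $\theta$-comparison via Proposition~\ref{classification}). Your write-up simply makes explicit the points the paper leaves implicit --- semisimplicity, exclusion of the split case, and working with eigenvalue \emph{ratios} so that the order in $\PGL(2,q)$ rather than in $\SL(2,q)$ is what is tracked --- which is a careful elaboration rather than a different method.
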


\begin{proof}
This follows easily from the observation that every element of order $(q+1)/2$ 
in $\SL(2,q)$ is conjugate in $\GL(2,q^2)$ to one of the form 
$\, \begin{pmatrix} 
 \lambda^i & 0 \\ 
 0 & \lambda^{-i}
 \end{pmatrix}, \, $ 
where $\lambda$ is an element of order $(q+1)/2$ in the field $\mathbb{F}_{q^2}$, 
and the traces $\lambda^i + \lambda^{-i}$ are distinct in $\mathbb{F}_q$. 
\end{proof}

\begin{lemma}\label{3jans-pgl2q} 
For any given conjugacy class $C$ of elements of $\PSL(2,q)$ of order $l \not\in \{1,2,p\}$, 
there exists a triple $(x,y,xy)$ of elements of $\PSL(2,q)$ such that 
$x$ has order $2$, and $y$ has order $3$, and $xy$ is in $C$. 
Moreover, this triple is unique up to conjugacy in $\PGL(2,q)$ 
whenever $l \ne 6$. \\[-18pt] 
\end{lemma}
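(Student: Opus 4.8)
The plan is to recast everything in terms of the parameter $\theta$ and the trace calculus of $\SL(2,q)$. Put $\theta_0 = \theta(C)$. Since $l \notin \set{1,2,p}$, Lemma~\ref{theta from orders} shows $\theta_0 \notin \set{0,4}$, so by Proposition~\ref{classification} the value $\theta_0$ singles out one conjugacy class of $\PGL(2,q)$, namely the one containing $C$. Every element of $\PSL(2,q)$ has a lift of determinant $1$, so $\theta_0 = (\tr)^2$ is a square; fix $c \in \F_q$ with $c^2 = \theta_0$. The target is a pair $X,Y \in \SL(2,q)$ with trace triple $(\tr X, \tr Y, \tr XY) = (0,-1,c)$: a determinant-$1$ matrix of trace $0$ satisfies $X^2 = -I$ and so projects to an involution $x$; a determinant-$1$ matrix of trace $-1$ has characteristic polynomial $t^2+t+1$, so $Y^3 = I$ and $y$ has order $3$; and $\det(XY)=1$ together with $\tr(XY)=c$ forces $\theta(xy) = c^2 = \theta_0$, placing $xy$ in the $\PGL$-class of $C$.

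\textbf{Existence.} I first realise this trace triple. With $X = \left(\begin{smallmatrix} 0 & -1 \\ 1 & 0\end{smallmatrix}\right)$ and $Y = \left(\begin{smallmatrix} p & q \\ r & -1-p\end{smallmatrix}\right)$, the conditions $\det Y = 1$ and $\tr(XY) = q - r = c$ reduce to the single equation $q^2 - cq + (p^2+p+1) = 0$, solvable in $q$ once its discriminant $c^2 - 4(p^2+p+1)$ is a square, say $z^2$. This is the equation $(2p+1)^2 + z^2 = c^2 - 3$, which has a solution because every element of $\F_q$ is a sum of two squares. This yields $X,Y$, hence a triple with $xy$ in the $\PGL$-class of $C$. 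That $\PGL$-class is either a single $\PSL$-class, which must then be $C$ itself, or it splits into two $\PSL$-classes interchanged by conjugation by any $\tau \in \PGL(2,q) \ssm \PSL(2,q)$; in the latter case, if $xy \notin C$ I conjugate the whole triple by such a $\tau$ (which normalises $\PSL(2,q)$ and preserves the orders of $x$ and $y$) to move $xy$ into $C$.

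\textbf{Uniqueness for $l \ne 6$.} Let $(x_1,y_1)$ and $(x_2,y_2)$ be two admissible triples and lift each to $X_i, Y_i \in \SL(2,q)$, taking for $Y_i$ the lift with $Y_i^3 = I$, so $\tr Y_i = -1$. The trace $\tr X_i = 0$ is unchanged by $X_i \mapsto -X_i$, whereas $\tr(X_iY_i)$ changes sign; since $\tr(X_iY_i)^2 = \theta_0 = c^2$, I may choose the lifts of $x_1,x_2$ so that $\tr(X_1Y_1) = \tr(X_2Y_2) = c$. Both pairs then share the trace triple $(0,-1,c)$. The Fricke identity gives $\tr[X_i,Y_i] = 0 + 1 + c^2 - 0 - 2 = c^2 - 1$, and a pair is reducible (has a common eigenvector over $\bar{\F}_q$) precisely when $\tr[X_i,Y_i] = 2$, i.e.\ $c^2 = 3$, i.e.\ $\theta_0 = 3$; by Lemma~\ref{theta from orders} this is exactly the order-$6$ case. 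Thus for $l \ne 6$ both pairs are absolutely irreducible, and absolutely irreducible pairs in $\SL(2,q)$ with equal trace triples are conjugate by an element of $\GL(2,q)$ (cf.\ Macbeath \cite{Mac}). Projecting this conjugacy to $\PGL(2,q)$ carries $(x_1,y_1,x_1y_1)$ to $(x_2,y_2,x_2y_2)$.

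The main obstacle is the bookkeeping between $\SL(2,q)$ and $\PSL(2,q) \le \PGL(2,q)$: normalising the signs of the lifts so that the two pairs genuinely share the trace triple $(0,-1,c)$; distinguishing the (at most two) $\PSL$-classes inside a single $\PGL$-class, which is what forces the existence argument to allow an adjustment by the outer involution $\tau$; and invoking the descent statement that absolutely irreducible representations with equal character are conjugate already over $\F_q$, not merely over $\bar{\F}_q$. The restriction $l \notin \set{1,2,p}$ is precisely what keeps $\theta_0$ away from the exceptional values $0$ and $4$, so that Proposition~\ref{classification} applies.
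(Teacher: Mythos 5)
Your proof is correct, and on the existence half it is essentially the paper's own argument run in mirror image: the paper fixes the order-$3$ matrix $Y$ and solves for a trace-zero $X$ making $\tr(XY)$ any prescribed nonzero value, reducing to a quadratic whose solvability comes from the fact that two subsets of $\F_q$ of size $(q+1)/2$ must intersect; you instead fix $X$ and solve for a trace $-1$ matrix $Y$, reducing to $(2p+1)^2+z^2=c^2-3$ and the same counting fact (every element of $\F_q$ is a sum of two squares). Your normalisation is marginally cleaner --- the paper multiplies its quadratic by $3$ and so needs a separate treatment of characteristic $3$, which you avoid --- and you are more scrupulous about a point the paper dismisses with ``the main assertion now follows easily,'' namely descending from the $\PGL$-class that $\theta$ determines to the specific $\PSL$-class $C$; your fix is legitimate, since when a $\PGL$-class meets $\PSL(2,q)$ in two $\PSL$-classes, these are interchanged by every element of $\PGL(2,q)\ssm\PSL(2,q)$. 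Where you genuinely go beyond the paper is uniqueness, which the paper literally leaves ``as an exercise'': your argument --- normalise lifts to the common trace triple $(0,-1,c)$, compute $\tr[X_i,Y_i]=c^2-1$ by the Fricke identity, observe that the reducible case $\tr[X_i,Y_i]=2$ is exactly $\theta_0=3$, and then invoke Macbeath's rigidity theorem \cite{Mac} that absolutely irreducible pairs with equal trace triples are $\GL(2,q)$-conjugate --- is a complete proof of that claim, and it has the conceptual merit of explaining why $l=6$ is exceptional: it is precisely the reducible trace triple. Two cosmetic caveats: your matrix entries $p,q,r$ collide with $p$ and $q$ as the characteristic and the field order; and the implication $\theta_0=3\Rightarrow l=6$ requires Proposition~\ref{classification} (or a direct eigenvalue computation showing the eigenvalue ratio is a primitive sixth root of unity) in addition to Lemma~\ref{theta from orders}, which only gives the converse direction.
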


\begin{proof}
Every element of order $3$ in $\PSL(2,q)$ is conjugate in $\PGL(2,q)$ to the element 
$y : z \mapsto (z-1)/z$, induced by the matrix 
$Y = \, \begin{pmatrix} 
-1 & 1 \\ 
-1 & 0
 \end{pmatrix}, \, $ 
while any element of order $2$ in $\PSL(2,q)$ is induced by a matrix  $X$ 
of the form
$X = \, \begin{pmatrix} 
a & b \\ 
c & -a
 \end{pmatrix}, \, $ 
with trace $0$ and determinant $-a^2-bc = 1$.  
Now observe that for any such choice of $X$ and $Y$, we have \ 
$XY \, = \, \begin{pmatrix} 
-a-b & a \\ 
\phantom{-}a-c & c
 \end{pmatrix}, \, $
%
which has trace $\,{\rm tr}(XY) = -a-b+c\,$ and determinant $\,\det(XY) = \det(X)\det(Y) = 1$. 

This can be turned around:  we can show that for any given non-zero trace $r$, 
there exist $a,b$ and $c$ in $\F_q$ such that $-1 = a^2+bc$ and $r = -a-b+c$, 
and hence there exists a matrix $X$ of trace zero such that $XY$ has trace $r$, 
giving a triple $(x,y,xy)$ of the required type. 
Note that we need $c-b = r+a$ and $bc = -(1+a^2)$, and therefore 
$(c+b)^2 = (c-b)^2+4bc = r^2 +2ar-3a^2-4$.
Now if $p \ne 3$, then we can multiply this by 3 
and it becomes $3(c+b)^2 = 4r^2 - 12 - (3a-r)^2$; 
and then since in $\F_q$ there are $(q+1)/2$ elements of the form $3u^2$ 
and $(q+1)/2$ elements of the form $s-v^2$ for any given $s \in \F_q$, 
and any two subsets of size $(q+1)/2$ in $\F_q$ have non-empty intersection, 
the latter equation can be solved in $\F_q$ for $c+b$ and $3a-r$, 
and hence for $c+b$ and $a$ (and $c-b = r-a$), and hence for $a$, $b$ and $c$ 
(since $q$ is odd). 
On the other hand, if $p = 3$, then the equation 
becomes $(c+b)^2 = r^2 +2ar-3a^2-4 = r^2 +2ar-1$, 
which is even easier to solve for $a,b,c$, provided that $r \ne 0$. 

The main assertion now follows easily. 
Uniqueness up to conjugacy in $\PGL(2,q)$ when $l \ne 6$ is left as an exercise. 
\end{proof}

Since the automorphism group of $\PSL(2,p)$ is $\PGL(2,p)$ for every odd prime $p$, 
we obtain the following: 
 
\begin{corollary}
For any  prime $p > 3$, the number of distinct  $3$-januarials 
constructible from $\PSL(2,p)$ in the way described in sub-sections~ {\em\ref{findingfirst}} 
and~{\em\ref{associates}} is $\frac{1}{2}\phi((p+1)/2)$.  
\end{corollary}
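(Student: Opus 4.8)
The plan is to reduce the count to the number of conjugacy classes of elements of order $(p+1)/2$ in $\PGL(2,p)$, which the (unlabelled) lemma immediately preceding Lemma~\ref{3jans-pgl2q} evaluates as $\frac12\phi((p+1)/2)$. The two intermediate lemmas then do almost all the work; what remains is to translate ``distinct januarial'' into ``conjugacy class of a product'' and to deal with one exceptional prime.

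First I would make precise what data such a januarial records. It comes from a pair $(x,y)$ of elements of $\PSL(2,p)$ acting on $\PL(\F_p)$, with $x$ of order $2$ and $y$ of order $3$, and it is a januarial exactly when $\langle xy\rangle$ has two orbits of equal size $\abs{S}/2 = (p+1)/2$ on $\PL(\F_p)$. Since the orbits of $\langle xy\rangle$ are precisely the cycles of the permutation $xy$, this happens if and only if $xy$ is a product of two cycles of length $(p+1)/2$; and by the remark preceding Lemma~\ref{theta from orders}, this in turn is equivalent to $xy$ having order $(p+1)/2$. So the januarials in question are exactly the pairs $(x,y)$ in $\PSL(2,p)$ with $\ord(x)=2$, $\ord(y)=3$ and $\ord(xy)=(p+1)/2$, as $(x,y)$ ranges over all such pairs.

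Next I would fix the equivalence under which these are counted as distinct. Because a januarial is built from the natural action of $\PSL(2,p)$ on $\PL(\F_p)$, two pairs produce isomorphic januarials precisely when one is carried to the other by an automorphism of $\PSL(2,p)$; and since $\mathrm{Aut}(\PSL(2,p)) = \PGL(2,p)$ for every odd prime $p$, this means they are simultaneously conjugate in $\PGL(2,p)$. Hence the number of distinct januarials equals the number of $\PGL(2,p)$-conjugacy classes of pairs of the above type. The product $xy$ of such a pair lies in a conjugacy class of elements of order $(p+1)/2$, and Lemma~\ref{3jans-pgl2q} (whose hypothesis $l\notin\{1,2,p\}$ holds here, since $(p+1)/2\ge 3$ for $p>3$ and $(p+1)/2=p$ would force $p=1$) shows conversely that each such class is the class of the product of a pair $(x,y)$ of this type which is unique up to conjugacy in $\PGL(2,p)$. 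Assembling these, the assignment sending $(x,y)$ to the $\PGL(2,p)$-class of $xy$ is a bijection from the distinct januarials to the conjugacy classes of elements of order $(p+1)/2$, and the lemma counting those classes gives $\frac12\phi((p+1)/2)$, as required.

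The step needing the most care --- and the main obstacle --- is the exceptional value $l=(p+1)/2=6$, that is $p=11$, for which Lemma~\ref{3jans-pgl2q} does not assert uniqueness up to $\PGL(2,p)$-conjugacy. Here one must verify separately that the bijection survives, checking (via the exceptional-case analysis left as an exercise in Lemma~\ref{3jans-pgl2q}, or by direct computation in $\PSL(2,11)$) that $p=11$ still yields exactly $\frac12\phi(6)=1$ januarial. A secondary point to confirm is realisability: that every pair counted above genuinely arises from the construction of sub-sections~\ref{findingfirst} and~\ref{associates}. For $p\equiv 1\bmod 4$ this is immediate from the associate construction with $t:z\mapsto 1/z$, whereas for $p\equiv 3\bmod 4$ this involution lies in $\PGL(2,p)\ssm\PSL(2,p)$, so one should note that the requisite pairs nonetheless occur, up to $\PGL(2,p)$-conjugacy, among those produced.
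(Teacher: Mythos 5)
Your proposal is correct and takes essentially the same route as the paper: the paper derives the corollary in exactly this way, by combining Lemma~\ref{3jans-pgl2q} and the unlabelled counting lemma preceding it with the fact that the automorphism group of $\PSL(2,p)$ is $\PGL(2,p)$, so that distinct januarials correspond to $\PGL(2,p)$-classes of triples $(x,y,xy)$ and hence to conjugacy classes of elements of order $(p+1)/2$. The two caveats you flag --- the exceptional case $(p+1)/2=6$ (that is, $p=11$), where the uniqueness clause of Lemma~\ref{3jans-pgl2q} is unavailable, and the realisability of every such triple via the associate construction of sub-section~\ref{associates} --- are genuine subtleties that the paper passes over in silence, so your write-up is, if anything, more careful than the original.
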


For example, when $p= 37$ the number of $3$-januarials is $\phi(19)/2 = 9$, 
and when $p = 53$ the number is $\phi(27)/2 = 9$ as well.  
A further attractive property is as follows: 

\begin{lemma}
For every triple $(x,y,xy)$ as in Lemma {\em\ref{3jans-pgl2q}} with $l \ne 6$,
there exists an involution $t$ in $\PGL(2,q)$ such that
$t^{-1} x t = x^{-1}$ and $t^{-1} y t =y ^{-1}$.
\end{lemma}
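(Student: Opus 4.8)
The plan is to work with explicit matrix representatives and to exploit that $x$, being an involution, satisfies $x^{-1}=x$; thus the requirement $t^{-1}xt = x^{-1}$ only asks that $t$ commute with $x$ in $\PGL(2,q)$, while $t^{-1}yt = y^{-1}$ asks that $t$ invert $y$. Following Lemma~\ref{3jans-pgl2q}, I would normalise so that $y$ is induced by $Y = \begin{pmatrix} -1 & 1 \\ -1 & 0\end{pmatrix}$ and $x$ by a trace-zero matrix $X = \begin{pmatrix} a & b \\ c & -a\end{pmatrix}$ with $a^2 + bc = -1$, and seek $t$ induced by some $T \in \GL(2,q)$.

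First I would solve the $y$-condition. Writing $t^{-1}yt = y^{-1}$ as the matrix equation $YT = TY^{-1}$ and comparing entries forces $T = \begin{pmatrix} p & q \\ p+q & -p\end{pmatrix}$ for scalars $p,q$. The key bonus is that every such $T$ has trace zero, so (provided it is invertible and non-scalar) it automatically induces an involution, since $T^2 = (p^2+pq+q^2)I$ is scalar; the order-two requirement therefore comes for free. Next I would impose the $x$-condition. As $X$ and $T$ both have trace zero, the polarised Cayley--Hamilton identity gives $TX + XT = \tr(TX)\,I$, so $T$ anticommutes with $X$ --- which is all that is needed projectively, since $-X$ and $X$ induce the same element $x = x^{-1}$ --- precisely when the single linear condition $\tr(TX) = 0$ holds. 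A short computation turns this into $p(2a+b) + q(b+c) = 0$, solved by $(p,q) = (b+c,\,-(2a+b))$.

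The one thing left to check, and the crux of the matter, is that this $T$ is invertible, for otherwise it induces no element of $\PGL(2,q)$. Here I expect the hypothesis $l \ne 6$ to enter decisively: a direct calculation, using $a^2+bc=-1$ and $\tr(XY) = c-a-b$, should give $\det T = -(p^2+pq+q^2) = 3 - \theta(xy)$. By Lemma~\ref{theta from orders} and Proposition~\ref{classification}, one has $\theta(xy) = 3$ exactly when $xy$ has order $6$; since $l \ne 6$ we obtain $\theta(xy) \ne 3$ and hence $\det T \ne 0$. (When the linear form $p(2a+b)+q(b+c)$ vanishes identically, one checks that this too forces $\theta(xy)=3$, i.e. $l=6$, so it does not occur; in characteristic $3$ the relation reads $\det T = -\theta(xy)$, and one uses instead $l \ne 2$.) With $T$ invertible it is non-scalar, so $t$ is a genuine involution of $\PGL(2,q)$ satisfying both required relations.

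The main obstacle is thus not the construction of $T$ --- which is essentially forced by the two relations --- but verifying the determinant identity $\det T = 3 - \theta(xy)$, which is what pins the single degenerate case precisely to the excluded order $l=6$ and so makes the hypothesis do exactly the work required.
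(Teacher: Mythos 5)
Your proof is correct and is essentially the paper's own argument: the matrix $T$ you arrive at is (up to sign) exactly the matrix the paper exhibits, you verify the same determinant identity $\det T = 3\det(XY)-(\tr(XY))^2 = (3-\theta(xy))\det(XY)$, and you draw the same conclusions ($l\neq 6$ gives invertibility; trace zero gives $t^2=1$). The only differences are presentational --- you derive $T$ systematically from the two conjugation conditions rather than producing it ready-made, and you explicitly flag the characteristic-$3$ case (where the identity reads $\det T=-\theta(xy)$ and the hypothesis $l\neq 2$ does the work), a point the paper passes over silently.
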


\begin{proof}
We may suppose that $x$ and $y$ are induced by the matrices $X$ and $Y$ as in the proof 
of Lemma {\em\ref{3jans-pgl2q}}.  
In that case, let 
$T =  \, \begin{pmatrix} 
-(b+c) & 2a+b \\ 
2a-c & b+c
 \end{pmatrix}, \, $
which has   the property that $XT = TX^{-1}$ while $YT = TY^{-1}$. 
The determinant of $T$ is 
$$-(b+c)^2 -(2a+b)(2a-c) \  = \ -4a^2 - b^2 - c^2 - 2ab +2ac - bc, $$ 
which equals $\,3\det(XY) - (\tr(XY))^2,\,$ since $\det(XY) = -a^2-bc$ while $\tr(XY) = -a-b+c$, 
and therefore $T$ is invertible if and only if $\theta(XY) = (\tr(XY))^2/\det(XY) \ne 3$, 
or equivalently, $XY$ does not have order $6$. 
Finally, note that 
if $T$ is invertible, then since its trace is zero, we have $t^2=1$.
\end{proof}


\subsection{Necessary conditions for associates to yield januarials} \label{when the associate succeeds}

For any positive integer $n$, define $\,\theta_{n}\,$to be the set of all values of $\theta(g)$  
for elements $g$ of order $n$ in the group $\PGL(2,q)$.  Consider the effect of the mapping $r \mapsto (r-1)^2$ on the 
elements of this set $\theta_n$, for some $n$, as follows. 

Suppose that $r \in \theta_n$, where $n$ is coprime to $q$, and let $g$ be an element of 
order $n$ in $\PGL(2,q)$ with $\theta(g) = r$.  By taking 
a conjugate of $g$ if necessary in $\PGL(q^2)$, we may assume that  
$g$ is the transformation $ z \mapsto \rho z$ induced by the matrix 
$$
M = \begin{pmatrix}
  \rho & 0 \\
  0 & 1 \end{pmatrix}, 
$$
where $\rho$ is a primitive $n$th root of $1$ in $\F_q$ or $\F_{q^2}$. 
In this case, $\tr(M) = \rho +1$ while $\det(M) = \rho$, and so 
$$ 
r  \ = \ \theta(g) \ = \  \theta(M) \ = \ \frac{(\rho +1)^2}{\rho} \  = \  \rho + \rho ^{-1} +2.
$$
It follows that $(r -2)^2  =  (\rho + \rho ^{-1})^2 = \rho^2 + \rho^{-2} +2$. 
But
$r$ is of order $n$ and so $\rho$ will be a primitive $n$th root of unity.  
In particular, if $n$ is odd then also $\rho^2$ is a primitive $n$th root of unity,   
in which case $(r-2)^2 = \rho^2 + \rho^{-2} +2 = \theta(M^2)$, which also 
belongs to $\theta_n$. 
Iterating the procedure then yields further elements of $\theta_n$, 
until we reach a stage where $\rho^{2^i} = \rho^{\pm 1}$, 
and then $\rho^{2^i} + \rho^{-2^i} +2 = \rho + \rho ^{-1} +2 = r$.

\ms

We now derive two necessary conditions on the values of $\theta(g)$ in $\theta_n$ 
in the special case where $n = (q+1)/2$.  

\begin{lemma} \label{necessities}
If $g$ is an element of order $(q+1)/2$ in $\PSL(2,q)$, 
then $\theta(g)$ is a square in $F_q$ while  $\theta(g)-4$ is not a square in $F_q$. \\[-18pt] 
\end{lemma}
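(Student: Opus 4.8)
The plan is to diagonalise $g$ over the quadratic extension $\F_{q^2}$ and then read off both claims from the action of the Frobenius $w \mapsto w^q$ on two simple expressions in the eigenvalue.

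First I would use the remark immediately preceding the lemma: an element $g$ of order $(q+1)/2$ in $\PSL(2,q)$ is the square of an element $h$ of order $q+1$ in $\PGL(2,q)$, which lies in a nonsplit (cyclic) torus. Arguing as in the paragraph before the statement, I may conjugate within $\PGL(2,q^2)$ so that $h$ is induced by $\mathrm{diag}(\mu,1)$ with $\mu$ a primitive $(q+1)$th root of unity in $\F_{q^2}$. The defining relation $\mu^{q+1}=1$ then gives the crucial identity $\mu^q = \mu^{-1}$. Since $g=h^2$ is induced by $\mathrm{diag}(\mu^2,1)$, the formula $\theta(M)=(\tr M)^2/\det M$ gives
$$ \theta(g) \ = \ \frac{(\mu^2+1)^2}{\mu^2} \ = \ \mu^2 + \mu^{-2} + 2. $$

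Next I would introduce $a=\mu+\mu^{-1}$ and $b=\mu-\mu^{-1}$ and record the two elementary identities $\theta(g)=a^2$ and $\theta(g)-4=b^2$, obtained by expanding $(\mu\pm\mu^{-1})^2$. Applying the Frobenius and using $\mu^q=\mu^{-1}$, I get $a^q = \mu^{-1}+\mu = a$, so $a\in\F_q$ and hence $\theta(g)=a^2$ is a square in $\F_q$; this disposes of the first assertion. On the other hand $b^q = \mu^{-1}-\mu = -b$. Because $\mu$ has order $q+1\ge 6$ we have $\mu\ne\pm1$, so $b\ne 0$ and $b^q=-b\ne b$, which means $b\in\F_{q^2}\setminus\F_q$. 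But $b^2=\theta(g)-4$ lies in $\F_q$, and a nonzero element of $\F_q$ whose square roots lie outside $\F_q$ cannot be a square in $\F_q$ (otherwise $b=\pm c$ for some $c\in\F_q$, forcing $b\in\F_q$). Hence $\theta(g)-4$ is a non-square, giving the second assertion.

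The only point requiring genuine care --- the would-be obstacle --- is the opening reduction: justifying that $g$ really is the square of an order-$(q+1)$ element sitting in the nonsplit torus, so that its diagonalising eigenvalue $\mu$ is a $(q+1)$th root of unity satisfying $\mu^q=\mu^{-1}$. Once that norm-one relation is in hand, both conclusions follow purely formally from the Frobenius computation, and the remaining checks (that $b\ne 0$, and that $b^2$ is a non-square) are immediate.
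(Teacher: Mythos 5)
Your proof is correct and follows essentially the same route as the paper's: both diagonalise $g$ over $\F_{q^2}$ as the projective image of $\mathrm{diag}(\mu^2,1)$ with $\mu$ a primitive $(q+1)$th root of unity, and then read off $\theta(g)=(\mu+\mu^{-1})^2$ and $\theta(g)-4=(\mu-\mu^{-1})^2$. The only difference is cosmetic: you verify $\mu+\mu^{-1}\in\F_q$ and $\mu-\mu^{-1}\notin\F_q$ by an explicit Frobenius computation ($\mu^q=\mu^{-1}$), whereas the paper asserts the former and deduces the latter from $\mu\notin\F_q$ via $\mu-\mu^{-1}=2\mu-(\mu+\mu^{-1})$.
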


\begin{proof}
First, $g$ is conjugate in $\PGL(2,q^2)$ to the projective image of 
$M = \begin{pmatrix}
  \mu^2 & 0 \\
  0 & 1
\end{pmatrix}$
where $\mu$ is a primitive $(q+1)$th root of unity in $\mathbb{F}_{q^2}$, and   
this gives  
$$\theta(g) \ = \ \frac{(\tr\, M)^2}{\det\, M\,}  \ = \ \frac{(\mu^2+1)^2}{\mu^2} \ = \ \mu^2 + 2 + \mu^{-2}.$$ 

In particular, $\theta(g) = \mu^2 + 2 + \mu^{-2} = (\mu+\mu^{-1})^2$, which is a square in $\F_q$ 
(since $\mu+\mu^{-1} \in \F_q$). 
On the other hand, $\theta(g)-4 = \mu^2  - 2 + \mu^{-2} = (\mu- \mu^{-1})^2$, 
which is not a square in $\F_q$, since $\mu-\mu^{-1} = 2\mu - (\mu+\mu^{-1}) \not\in \F_q$. 
\end{proof}
 
\begin{cor} \label{conditions on p}
Suppose  $g \in \PSL(2,p)$ has order $(p+1)/2$.    \\[-18pt] 
\begin{enumerate}
\item[{\rm (a)}] If $\theta(g)=-1$, \ then $p \equiv 13$ or $17$  mod $20$.
\item[{\rm (b)}] If $\theta(g)=-2$, \ then $p \equiv 17$ or $19$  mod $24$.
\item[{\rm (c)}] If $\theta(g)=-3$, \ then $p \equiv 13$, $19$ or $31$  mod $42$. \\[-18pt] 
\end{enumerate}
\end{cor}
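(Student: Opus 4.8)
The plan is to combine the two necessary conditions from Lemma~\ref{necessities} with quadratic reciprocity. By that lemma, if $g \in \PSL(2,p)$ has order $(p+1)/2$, then $\theta(g)$ is a square mod $p$ and $\theta(g)-4$ is a non-square mod $p$. For each of the three cases we simply substitute the given value of $\theta(g)$ and translate the resulting Legendre-symbol conditions into congruence conditions on $p$.

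For case (a), with $\theta(g) = -1$, the two conditions become: $-1$ is a square mod $p$ and $-5$ is a non-square mod $p$. First I would use the standard fact that $-1$ is a square mod $p$ iff $p \equiv 1 \pmod 4$. Then I would evaluate the Legendre symbol $\left(\frac{-5}{p}\right) = \left(\frac{-1}{p}\right)\left(\frac{5}{p}\right)$; since we already know $p \equiv 1 \pmod 4$, we have $\left(\frac{-1}{p}\right)=1$, so the condition reduces to $\left(\frac{5}{p}\right) = -1$. By quadratic reciprocity (and since $5 \equiv 1 \pmod 4$), $\left(\frac{5}{p}\right) = \left(\frac{p}{5}\right)$, which is $-1$ exactly when $p \equiv \pm 2 \pmod 5$. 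Intersecting $p \equiv 1 \pmod 4$ with $p \equiv 2,3 \pmod 5$ via the Chinese Remainder Theorem should yield $p \equiv 13$ or $17 \pmod{20}$. Cases (b) and (c) proceed identically: for (b) I would require $-2$ a square and $-6$ a non-square mod $p$, working modulo $24$; for (c) I would require $-3$ a square and $-7$ a non-square mod $p$, working modulo $42$. Each reduces to evaluating a handful of Legendre symbols $\left(\frac{-1}{p}\right)$, $\left(\frac{2}{p}\right)$, $\left(\frac{3}{p}\right)$, $\left(\frac{7}{p}\right)$ and applying reciprocity.

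I would organize the computation by recording, for each relevant small prime $\ell \in \{2,3,5,7\}$ and for $-1$, the congruence classes of $p$ (modulo $8$ for $\left(\frac{2}{p}\right)$, modulo $4$ for $\left(\frac{-1}{p}\right)$, and modulo the odd primes via reciprocity) on which the symbol takes each value. Then each case is a matter of intersecting the class where $\theta(g)$ is a square with the class where $\theta(g)-4$ is a non-square, and reading off the surviving residues modulo $20$, $24$, or $42$ respectively by CRT. The moduli $20 = 4\cdot 5$, $24 = 8\cdot 3$, and $42 = 2\cdot 3\cdot 7$ are exactly the products of the prime-power moduli controlling the relevant symbols, which is why those are the natural moduli in the statement.

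The main obstacle is purely bookkeeping rather than conceptual: one must be careful to track the sign conventions in quadratic reciprocity (especially the supplementary laws for $\left(\frac{-1}{p}\right)$ and $\left(\frac{2}{p}\right)$, which depend on $p$ mod $4$ and mod $8$ respectively) and to intersect the congruence conditions correctly via CRT without dropping or adding spurious residue classes. I expect no difficulty beyond verifying that the intersection of the two Legendre-symbol conditions in each case produces exactly the stated residues and no others; this is a finite check that can be confirmed by tabulating the values of the symbols across all residue classes modulo the stated modulus.
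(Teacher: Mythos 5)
Your proposal is correct and takes essentially the same route as the paper: substitute the given value of $\theta(g)$ into the two conditions of Lemma~\ref{necessities}, convert them into congruences on $p$ via the supplementary laws and quadratic reciprocity, and intersect by CRT (the paper streamlines the symbol computations by noting, e.g., that since $-1$ is a square, $-5$ is a non-square iff $5$ is, exactly as in your factorisation $\left(\frac{-5}{p}\right)=\left(\frac{-1}{p}\right)\left(\frac{5}{p}\right)$). Your worked case (a) matches the paper's, and cases (b) and (c) go through by the same bookkeeping you describe.
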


\begin{proof} 
In case (a), by Lemma~\ref{necessities} we require that $-1 = \theta(g)$ is  a square mod $p$ 
while $-5 = \theta(g) - 4$ is not, and hence also $5$ is not.  
Thus $p \equiv1$ mod $4$, and by quadratic reciprocity, also $p \equiv 2$ or $3$ mod $5$, 
giving $p \equiv 13$ or $17$  mod $20$. 
Similarly, in case (b) we require that $-2$ is a square mod $p$ while $3$ is not.  
It follows that $p \equiv 1$ or $3$ mod $8$, while also $p \not\equiv \pm 1$ mod $12$, 
and therefore $p \equiv 17$ or  $19$ mod $24$ (since we are assuming $p > 3$).   
Finally, in case (c) we require that $-3$ is a square mod $p$ while $-7$ is not, 
and hence that $p$ is a square mod $3$ and a non-square modulo $7$, giving 
$p \equiv 10$, $13$ or $19$ mod $21$, and therefore $p \equiv 13$, $19$ or $31$  mod $42$.
\end{proof}

Next, we give what Higman described as the `Pythagorean Lemma'.   
One motivation for this choice of name is that for an element $X$ of ${\rm SO}(3)$, we could 
define $\theta(X)$ to be $4\cos^2(\phi/2)$, where $\phi$ is the angle of rotation of $X$.   
Now let $a$, $b$ and $c$ be half-turns about the three co-ordinate axes, 
and let $d$ be a half-turn about any unit vector $(\cos \alpha, \cos \beta, \cos \gamma)$.   
Then the angle of rotation of $ad$ is twice the angle between the 
axes of $a$ and $d$, namely $2\alpha$, and similarly the angles of rotation of $bd$ and $cd$ 
are $2\beta$ and $2\gamma$. 
Thus $\theta(ad) + \theta(bd) + \theta(cd) = 4\cos^2 \alpha + 4\cos^2 \beta + 4\cos^2 \gamma =  4$.

\begin{lemma}[Pythagorean Lemma]  Suppose $a$, $b$ and $c$ are the
non-identity elements of a subgroup of $\PSL(2,q)$ isomorphic to the Klein $4$-group.  
If $d$ is any element of order $2$ in $\PSL(2,q)$, then $$ \theta (ad) + \theta (bd) + \theta (cd) \  = \ 4.$$
\end{lemma}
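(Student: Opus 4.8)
The plan is to collapse everything to a single sum-of-squares identity, which is exactly what makes the name \emph{Pythagorean} apt. Since $a,b,c,d$ all have order $2$ in $\PSL(2,q)$, each lifts to a trace-zero matrix in $\SL(2,q)$: an involution is induced by some $M \in \SL(2,q)$ with $M^2$ scalar, i.e.\ $M^2 = \pm I$, and $\det M = 1$ rules out $M^2 = I$ (which would force $\det M = -1$), so $M^2 = -I$, whence by the characteristic polynomial $\tr M = 0$. Write $A,B,C,D$ for such lifts of $a,b,c,d$. Because $\det = 1$ throughout, $\det(AD) = \det(BD) = \det(CD) = 1$, so by the definition of $\theta$ the claim reduces to showing
$$(\tr AD)^2 + (\tr BD)^2 + (\tr CD)^2 \ = \ 4.$$

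First I would establish quaternion-like relations among $A,B,C$. A short computation shows that for any two trace-zero $2\times2$ matrices $AB + BA$ is a scalar matrix (expand $(A+B)^2$ and use that a trace-zero matrix squares to a scalar multiple of $I$). Since $a$ and $b$ are distinct commuting involutions, $AB = \pm BA$; the case $AB = BA$ would make $AB$ scalar and hence $a = b$, so in fact $AB = -BA$. As $q$ is odd, this anticommutation forces $\tr(AB) = 0$, so after adjusting $C$ by a sign we may take $C = AB$, and one checks the full relations $A^2 = B^2 = C^2 = -I$ with $A,B,C$ pairwise anticommuting (e.g.\ $AB = C$, $BC = A$, $CA = B$). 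In particular $\{I,A,B,C\}$ is a basis of the $2\times2$ matrices over $\F_q$.

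Next I would expand $D$ in this basis. Writing $D = \delta_0 I + \delta_1 A + \delta_2 B + \delta_3 C$ and using $\tr D = 0$ (with $q$ odd) gives $\delta_0 = 0$, so $D = \delta_1 A + \delta_2 B + \delta_3 C$. Using $A^2 = -I$, $AB = C$, $AC = -B$, one computes $AD = -\delta_1 I + \delta_2 C - \delta_3 B$, whence $\tr(AD) = -2\delta_1$, and symmetrically $\tr(BD) = -2\delta_2$ and $\tr(CD) = -2\delta_3$. Squaring $D$ and using that the cross terms cancel by anticommutativity gives $D^2 = -(\delta_1^2 + \delta_2^2 + \delta_3^2) I$; since $D^2 = -I$ this yields $\delta_1^2 + \delta_2^2 + \delta_3^2 = 1$. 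Substituting, the left-hand side of the displayed equation equals $4(\delta_1^2 + \delta_2^2 + \delta_3^2) = 4$, as required.

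The genuinely delicate step is the second paragraph: pinning down that commuting distinct involutions \emph{anticommute} as $\SL(2,q)$ matrices, and hence that $\{I,A,B,C\}$ is a quaternion basis. Everything afterwards is a direct trace computation, and the concluding relation $\delta_1^2 + \delta_2^2 + \delta_3^2 = 1$ is precisely the Pythagorean identity among the components of the unit ``axis'' of $D$, mirroring the direction-cosine identity $\cos^2\alpha + \cos^2\beta + \cos^2\gamma = 1$ from the $\textup{SO}(3)$ motivation.
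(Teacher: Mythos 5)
Your proof is correct, and it takes a genuinely different route from the paper's. The paper proves the identity by normalisation: it passes to the quadratic extension $\F_{q^2}$, where all Klein $4$-subgroups of $\PSL(2,q^2)$ are conjugate (this is where it cites Huppert's classification of subgroups of $\PSL(2,p^f)$), so that one may take $a : z \mapsto -z$, $b : z \mapsto -1/z$, $c : z \mapsto 1/z$; a general involution is then written as $d : z \mapsto (\alpha z + \beta)/(\gamma z - \alpha)$, and the three values $\theta(ad)$, $\theta(bd)$, $\theta(cd)$ are computed as explicit rational expressions in $\alpha, \beta, \gamma$ whose sum visibly collapses to $4$. You avoid both the field extension and the appeal to the subgroup classification: your observation that distinct commuting involutions of $\PSL(2,q)$ lift to anticommuting trace-zero matrices in $\SL(2,q)$ gives the quaternion relations for $A$, $B$, $C = AB$, and then the identity becomes structural rather than computational --- expanding $D = \delta_1 A + \delta_2 B + \delta_3 C$ yields $\theta(ad) + \theta(bd) + \theta(cd) = 4(\delta_1^2 + \delta_2^2 + \delta_3^2)$, while $D^2 = -I$ forces $\delta_1^2 + \delta_2^2 + \delta_3^2 = 1$. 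What each approach buys: the paper's is short once the conjugacy fact is granted (and tacitly uses that $\theta$ and the hypotheses are insensitive to passing to $\PSL(2,q^2)$); yours is self-contained, stays over the ground field, and makes the lemma's name transparent, since $\delta_1^2 + \delta_2^2 + \delta_3^2 = 1$ is the exact finite-field analogue of the direction-cosine identity $\cos^2\alpha + \cos^2\beta + \cos^2\gamma = 1$ in the ${\rm SO}(3)$ motivation that precedes the lemma in the paper. The only step you assert rather than prove is that $\{I, A, B, C\}$ is a basis of the algebra of $2 \times 2$ matrices over $\F_q$; this is standard and follows from a one-line trace argument (multiply a vanishing linear combination by each of $I$, $A$, $B$, $C$ in turn and take traces, using that $q$ is odd), so it is not a genuine gap.
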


\begin{proof}
One can take a quadratic extension $\mathbb{F}_{q^2}$ of the ground field $\mathbb{F}_q$, 
and then in $\PSL(2,\mathbb{F}_{q^2})$, all copies of the Klein $4$-group are
conjugate,   
since we are assuming that $q$ is odd.   
(See the classification of subgroups of $\PSL(2,p^f)$  
in \cite{Hup} for example.) 
Hence we may assume that our 
Klein $4$-group  in $\PGL(2,q)$ is generated by $$ a: z \mapsto
-z, \ \ \ \ \ \ b : z \mapsto -\frac{1}{z}, \ \ \ \hbox{ and } \ \ \ \ c: z
\mapsto \frac{1}{z}.$$  Now any involution $d$ has trace zero and hence is of the form \ 
${\displaystyle d: z \mapsto \frac{\alpha z + \beta}{ \gamma z - \alpha}.}$ 
\newline From this we find that 
$$
ad : z \mapsto
\frac{-\alpha z+ \beta}{-\gamma z - \alpha}, \ \ \ \ \ bd: z
\mapsto  \frac{-\alpha + \beta z}{-\gamma  - \alpha z}, 
\ \ \ \hbox{ and } \ \ \ \ 
cd: z \mapsto  \frac{\alpha + \beta z}{\gamma  - \alpha z}, 
$$ 
and therefore 
$$ 
\theta (ad) + \theta (bd) + \theta (cd) \  = \  \frac{4 \alpha^2}{\alpha^2 + \beta
\gamma} + \frac{(\beta - \gamma)^2}{-\beta \gamma - \alpha^2} +
\frac{(\beta + \gamma)^2}{\beta \gamma + \alpha^2} \ = \ 4,$$ as
required.
\end{proof}


Lemma~\ref{theta from orders}, the Pythagorean Lemma and Corollary~\ref{conditions on p} combine to give us necessary conditions for  associates formed as in Section~\ref{associates} to yield januarials.  
The scope of this result is limited to $k$ equal to  $3$, $4$ or $6$, 
since these are the only orders for which  Lemma~\ref{theta from orders}  applies.

\begin{cor}   \label{necessary conditions}
  Consider $\Delta(2,k,p)  \  = \  \langle \,  x,y \,  \mid \,  x^2 = y^k = (xy)^p =1 \, \rangle$ acting on $\F_{p} \cup \set{\infty}$ via $\PSL(2,p)$ in such a way that  $xy$ is the transformation $z \mapsto z+1$.  Let $t$ be an involution in $\PGL(2,p)$ such that $t^{-1}xt=x^{-1}$ and $t^{-1}yt=y^{-1}$, and suppose that the resulting associate coset diagram found by replacing  $x$ by $xt$ yields a $k$-januarial for $\PSL(2,p)$ 
or $\PGL(2,p)$, depending on whether or not $t$ lies in $\PSL(2,p)$.     
Then \\[-20pt] 
\begin{enumerate}
\item[{\rm (a)}]  if $k=3$, then  $p \equiv 13$ or $17$ mod $20\,;$ \\[-15pt]   
\item[{\rm (b)}]  if $k=4$, then    $p \equiv 17$ or $19$  mod $24\,;$\\[-15pt] 
\item[{\rm (c)}]  if $k=6$, then  $p \equiv 13$, $19$ or $31$  mod $42$.
\end{enumerate}
\end{cor}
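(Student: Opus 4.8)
The plan is to translate the combinatorial hypothesis ``the associate yields a $k$-januarial'' into an arithmetic statement about $\theta(xt)$, and then invoke Corollary~\ref{conditions on p}. First I would recall what the januarial condition means for the associate: by the definition of a $k$-januarial, the element $(xt)y$ must have exactly two $\langle (xt)y\rangle$-orbits of equal size $|S|/2$ on $\PL(\F_p)$. Since $xty$ acts on the projective line of size $p+1$, two equal orbits of size $(p+1)/2$ force $xty$ to be the product of two cycles of length $(p+1)/2$, and hence (by the remark preceding Lemma~\ref{theta from orders}) $xty$ is an element of order $(p+1)/2$ lying in $\PSL(2,p)$. So the hypothesis delivers an element $g := (xt)y$ of order $(p+1)/2$ in $\PSL(2,p)$, which is exactly the setting of Corollary~\ref{conditions on p}.

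The next step is to pin down $\theta(g)$ for each value of $k$. The key identity is the Pythagorean Lemma. Observe that $x$, $xt$ and $t$ generate (together with the identity) a Klein $4$-group: they are pairwise commuting involutions since $t^{-1}xt = x$ means $x$ and $t$ commute, so $\{1,x,t,xt\}$ is a copy of the Klein $4$-group in $\PGL(2,p)$, and by hypothesis it lies in $\PSL(2,p)$ when $t$ does. Taking $a=x$, $b=t$, $c=xt$ and $d=y$ is \emph{not} directly allowed because $y$ has order $k$, not $2$; so instead I would apply the Pythagorean Lemma with the distinguished involution being a suitable half-turn and the three involutions being $x$, $t$, $xt$, reading off
$$
\theta(xy) + \theta(ty) + \theta(xty) \ = \ 4,
$$
(after checking the Pythagorean hypotheses apply to the relevant Klein $4$-group and involution). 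Here $\theta(xy) = \theta(z \mapsto z+1) = 4$ by Lemma~\ref{theta from orders} (the parabolic/order-$p$ case), so the identity collapses to $\theta(ty) + \theta(xty) = 0$, i.e. $\theta(g) = \theta(xty) = -\theta(ty)$. Since $t$ and $y$ commute up to the relation $t^{-1}yt = y^{-1}$, the element $ty$ has order equal to $2k/\gcd(\cdots)$, and its $\theta$-value is governed by $k$ via Lemma~\ref{theta from orders}: when $k=3$, $k=4$, $k=6$ one computes $\theta(ty) = 1, 2, 3$ respectively, giving $\theta(g) = -1, -2, -3$.

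With $\theta(g) = -1$, $-2$ or $-3$ in hand for $k = 3$, $4$, $6$ respectively, the conclusion is immediate: apply Corollary~\ref{conditions on p}(a), (b), (c) to the order-$(p+1)/2$ element $g \in \PSL(2,p)$ to obtain the stated congruences mod $20$, $24$ and $42$. I expect the main obstacle to be the careful bookkeeping in the second step --- precisely justifying that the three involutions $x$, $t$, $xt$ do form the non-identity elements of a Klein $4$-group (which hinges on $x$ and $t$ commuting, true because $t^{-1}xt = x^{-1} = x$) and correctly identifying which element plays the role of the ``other'' involution $d$ so that the Pythagorean Lemma applies and yields the relation $\theta(xty) = -\theta(ty)$. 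Equally delicate is computing $\theta(ty)$ from the order of $ty$: one must verify that $ty$ has order $2$, $4$ or $6$ according to whether $k = 3$, $4$ or $6$ (using $t^{-1}yt = y^{-1}$ so that $(ty)^2 = t y t y = y^{-1} y = 1$ when... and so on), so that Lemma~\ref{theta from orders} applies cleanly. Everything after that is a direct citation of Corollary~\ref{conditions on p}, so the genuine content lies entirely in correctly assembling these two $\theta$-computations.
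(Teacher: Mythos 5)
Your overall route is the same as the paper's: translate the januarial hypothesis into ``$xty$ has order $(p+1)/2$, hence lies in $\PSL(2,p)$'', pin down $\theta(xty)$ via the Pythagorean Lemma applied to the Klein $4$-group $\set{1,x,t,xt}$, and finish by quoting Corollary~\ref{conditions on p}. Your first and last steps are sound. The middle step, however --- which you correctly identify as the crux --- is wrong as written. The identity you display, $\theta(xy)+\theta(ty)+\theta(xty)=4$, is false, and no choice of ``suitable half-turn'' $d$ can produce it. The reason is the computation you half-start in your last paragraph: since $t$ inverts $y$, we have $(ty)^2 = tyty = (t^{-1}yt)y = y^{-1}y = 1$ for \emph{every} $k$, so $ty$ is always an involution and $\theta(ty)=0$ by Lemma~\ref{theta from orders}. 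Your claim that $\theta(ty)=1,2,3$ according as $k=3,4,6$ is therefore also false ($ty$ never has order $3$, $4$ or $6$; in the dihedral group $\langle y,t\rangle$ it is a reflection). Plugging the true value $\theta(ty)=0$ into your identity would give $\theta(xty)=4-4-0=0$, i.e.\ $xty$ an involution, which is absurd since it must have order $(p+1)/2$. Concretely, in the $p=13$, $k=3$ example one has $\theta(xy)=4$, $\theta(ty)=0$, $\theta(xty)=-1$, and $4+0-1=3\neq 4$. Your final values $\theta(xty)=-1,-2,-3$ come out right only because these two errors cancel.

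The correct bookkeeping is to take the distinguished involution in the Pythagorean Lemma to be $d=ty$ itself, with $a,b,c$ being $t,x,xt$. Then the three products are $ad=t(ty)=y$, $bd=x(ty)=xty$ and $cd=(xt)(ty)=xy$, so the lemma gives $\theta(y)+\theta(xty)+\theta(xy)=4$: the term carrying the $k$-dependence is $\theta(y)$, not $\theta(ty)$. Now Lemma~\ref{theta from orders} gives $\theta(y)=1,2,3$ and $\theta(xy)=4$ (parabolic), whence $\theta(xty)=-\theta(y)=-1,-2,-3$, and Corollary~\ref{conditions on p} applies to $xty$ (or to its conjugate $xyt=t^{-1}(xty)t$) exactly as in your final step. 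This is precisely the paper's proof; with this one substitution your argument goes through.
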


\begin{proof}
When  $k = 3$, $4$ or $6$, Lemma~\ref{theta from orders} gives us $\theta(y) = 1, 2, 3$, respectively,   
and in all three cases $\theta(xy)=4$, since $xy$ is the transformation $z \mapsto z+1$.   
Apply the Pythagorean Lemma, by taking $a, b,c$ and $d$ as the four involutions $t,x, xt$ and $ty$ 
respectively, to give $\theta (y) + \theta(xty) + \theta (xy) =4.$  
Note that $\theta(xty) = \theta(xyt)$ because $t^{-1}(xty)t = (t^{-1}xt)yt = xyt$.
Thus we find $\theta(xyt) = \theta(xty) = 4 - \theta(y) - \theta(xy) =   -\theta(y)$, which equals $-1$, $-2$, or  $-3$, respectively, when $k$ is $3$, $4$, or $6$.  
Finally, for the associate to be a januarial we need $xyt$ to have order $(p+1)/2$, 
and so the constraints on $p$ follow from Corollary~\ref{conditions on p}.  
\end{proof}
${}$\\[-40pt] 
 
\section{Examples} \label{examples}

In this section we explore a number of   examples of $k$-januarials guided by Corollary~\ref{necessary conditions}.

We begin with $k=3$. The eight smallest  $p$ satisfying the conditions of Corollary~\ref{necessary conditions} are $13$, $17$, $37$, $53$, $73$, $97$, $113$ and $137$. 
We have already seen how the cases $p=13$ and $p=17$ yield the $3$-januarials depicted 
in Figure~\ref{sphere and torus}.   
The cases where $p$ is $37$, $53$, $73$, $97$ or $137$ all yield januarials.
On the other hand,  
the standard construction (with $x: z \mapsto -1/z$ and $y: z \mapsto (z-1)/z\,$) 
does not yield a januarial in the case $p = 113$, 
since in that case  $xty: z \mapsto (z+1)/z$, which has order $19$, rather than $(113+1)/2 = 57$.

\begin{figure}[ht]
\psfrag{x}{{$\begin{array}{rl} x: & \!\!\! z \mapsto -z \\ y: & \!\!\! z \mapsto (z-1)/z \\ xy: & \!\!\! z \mapsto (z+1)/z \end{array}$}}
\psfrag{i}{\tiny{$\infty$}}
\psfrag{0}{\tiny{$0$}}
\psfrag{1}{\tiny{$1$}}
\psfrag{2}{\tiny{$2$}}
\psfrag{3}{\tiny{$3$}}
\psfrag{4}{\tiny{$4$}}
\psfrag{5}{\tiny{$5$}}
\psfrag{6}{\tiny{$6$}}
\psfrag{7}{\tiny{$7$}}
\psfrag{8}{\tiny{$8$}}
\psfrag{9}{\tiny{$9$}}
\psfrag{10}{\tiny{$10$}}
\psfrag{11}{\tiny{$11$}}
\psfrag{12}{\tiny{$12$}}
\psfrag{13}{\tiny{$13$}}
\psfrag{14}{\tiny{$14$}}
\psfrag{15}{\tiny{$15$}}
\psfrag{16}{\tiny{$16$}}
\psfrag{17}{\tiny{$17$}}
\psfrag{18}{\tiny{$18$}}
\psfrag{19}{\tiny{$19$}}
\psfrag{20}{\tiny{$20$}}
\psfrag{21}{\tiny{$21$}}
\psfrag{22}{\tiny{$22$}}
\psfrag{23}{\tiny{$23$}}
\psfrag{24}{\tiny{$24$}}
\psfrag{25}{\tiny{$25$}}
\psfrag{26}{\tiny{$26$}}
\psfrag{27}{\tiny{$27$}}
\psfrag{28}{\tiny{$28$}}
\psfrag{29}{\tiny{$29$}}
\psfrag{30}{\tiny{$30$}}
\psfrag{31}{\tiny{$31$}}
\psfrag{32}{\tiny{$32$}}
\psfrag{33}{\tiny{$33$}}
\psfrag{34}{\tiny{$34$}}
\psfrag{35}{\tiny{$35$}}
\psfrag{36}{\tiny{$36$}}
\psfrag{37}{\tiny{$37$}}
\psfrag{38}{\tiny{$38$}}
\psfrag{39}{\tiny{$39$}}
\psfrag{40}{\tiny{$40$}}
\psfrag{41}{\tiny{$41$}}
\psfrag{42}{\tiny{$42$}}
\psfrag{43}{\tiny{$43$}}
\psfrag{44}{\tiny{$44$}}
\psfrag{45}{\tiny{$45$}}
\psfrag{46}{\tiny{$46$}}
\psfrag{47}{\tiny{$47$}}
\psfrag{48}{\tiny{$48$}}
\psfrag{49}{\tiny{$49$}}
\psfrag{50}{\tiny{$50$}}
\psfrag{51}{\tiny{$51$}}
\psfrag{52}{\tiny{$52$}}
\psfrag{53}{\tiny{$53$}}
\psfrag{54}{\tiny{$54$}}
\psfrag{55}{\tiny{$55$}}
\psfrag{56}{\tiny{$56$}}
\psfrag{57}{\tiny{$57$}}
\psfrag{58}{\tiny{$58$}}
\psfrag{59}{\tiny{$59$}}
\psfrag{60}{\tiny{$60$}}
\psfrag{61}{\tiny{$61$}}
\psfrag{62}{\tiny{$62$}}
\psfrag{63}{\tiny{$63$}}
\psfrag{64}{\tiny{$64$}}
\psfrag{65}{\tiny{$65$}}
\psfrag{66}{\tiny{$66$}}
\psfrag{67}{\tiny{$67$}}
\psfrag{68}{\tiny{$68$}}
\psfrag{69}{\tiny{$69$}}
\psfrag{70}{\tiny{$70$}}
\psfrag{71}{\tiny{$71$}}
\psfrag{72}{\tiny{$72$}}
\centerline{\epsfig{file=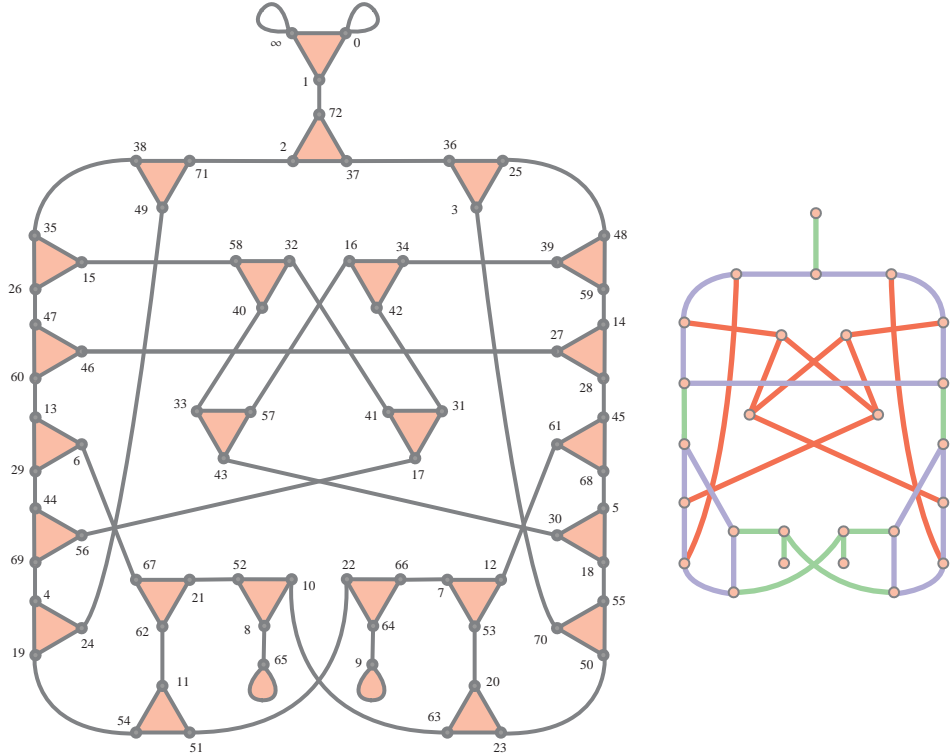}}
\caption{The associate of the standard action of $\PSL(2,73)$ on $\F_{73} \cup \set{\infty}$, 
giving an action of $\Delta(2,3,37)$ via $z \mapsto -z$ and $z \mapsto (z-1)/z$.  
This yields a $3$-januarial of genus $5$ and simple type $(3,2,1)$.} 
\label{PSL73 associate figure}
\end{figure}

The action of $\PSL(2,73)$ on $\F_{73} \cup \set{\infty}$ via $x: z \mapsto -1/z$ 
and $y: z \mapsto (z-1)/z$ gives a coset graph, the associate of which is depicted 
together with its companion graph in Figure~\ref{PSL73 associate figure}.  

The number of $x$-edges which are not loops  and $y$-faces in the associate 
are $36$ and $26$, respectively, so  the genus is $(36 -26)/2=5$ by Lemma~\ref{genus}.  
The type of the januarial is apparent from $\bar{\Gamma}$.  
The blue subgraph (which is the common boundary of $\bar{S_1}$ and $\bar{S_2}$) 
consists of three disjoint simple closed curves, and so $h=3$ by Lemma~\ref{simple type}.  
There are $22$ vertices on $\bar{S_1}$ and $21$ on $\bar{S_2}$; 
there are  $26$ edges on $\bar{S_1}$ (coloured green and blue), 
and  $27$ edges on $\bar{S_2}$ (coloured red and blue); 
and both $\bar{S_1}$ and $\bar{S_2}$ have $1$ face and $3$ holes.   
Hence by Lemma~\ref{finding g_i}, the genera of $\bar{S_1}$ and $\bar{S_2}$ 
are $(2-22+26-3-1)/2 = 1$ and $(2-21+27-3-1)/2 =2$, respectively.  
Accordingly, the $3$-januarial is of type $(3,2,1)$, and this gives an alternative means 
of identifying the genus as  $2 + 1 + 3-1 = 5$, by Lemma~\ref{genus from type}.

We now turn to $4$-januarials.  

For the standard construction (with $x: z \mapsto -1/z$ and $y: z \mapsto (z-1)/z\,$) 
to give a $4$-januarial for $\PSL(2,p)$, Corollary~\ref{necessary conditions} 
requires $p \equiv 17$ or $19$ mod $24$.  
The eight smallest values for the prime $p$ satisfying this condition 
are $17$, $19$, $41$, $43$, $67$, $89$, $113$ and $117$.  
But also if we want $xt$ to lie in $\PSL(2,p)$, 
we need $t: z \mapsto 1/z$ to lie in $\PSL(2,p)$, and then we need $-1$ to be a square 
mod $p$, and so $p \equiv 1$ mod $4$. 
The cases $p = 17$, $89$, $113$ and $117$ all give simple $4$-januarials for $\PSL(2,p)$ 
in this way, while the case $p = 41$ fails, since in that case $xy$ has order $7$ rather than 
the required $(41+1)/2 = 21$. 

On the other hand, if we are happy to construct $4$-januarials for $\PGL(2,p)$ instead 
of $\PSL(2,p)$, we can relax the requirements and allow $x$, $y$ or $t$ to lie 
in $\PGL(2,p) \setminus \PSL(2,p)$. 
When we do that, we get simple $4$-januarials for $\PSL(2,p)$ in the cases 
$p = 19$ and $43$ (but not for $p = 67$). 
We consider the case $p=43$ in more detail below.  

\begin{figure}[ht]
\psfrag{t}{{$t: z \mapsto 22/z$}} 
\psfrag{i}{\tiny{$\infty$}}
\psfrag{0}{\tiny{$0$}}
\psfrag{1}{\tiny{$1$}}
\psfrag{2}{\tiny{$2$}}
\psfrag{3}{\tiny{$3$}}
\psfrag{4}{\tiny{$4$}}
\psfrag{5}{\tiny{$5$}}
\psfrag{6}{\tiny{$6$}}
\psfrag{7}{\tiny{$7$}}
\psfrag{8}{\tiny{$8$}}
\psfrag{9}{\tiny{$9$}}
\psfrag{10}{\tiny{$10$}}
\psfrag{11}{\tiny{$11$}}
\psfrag{12}{\tiny{$12$}}
\psfrag{13}{\tiny{$13$}}
\psfrag{14}{\tiny{$14$}}
\psfrag{15}{\tiny{$15$}}
\psfrag{16}{\tiny{$16$}}
\psfrag{17}{\tiny{$17$}}
\psfrag{18}{\tiny{$18$}}
\psfrag{19}{\tiny{$19$}}
\psfrag{20}{\tiny{$20$}}
\psfrag{21}{\tiny{$21$}}
\psfrag{22}{\tiny{$22$}}
\psfrag{23}{\tiny{$23$}}
\psfrag{24}{\tiny{$24$}}
\psfrag{25}{\tiny{$25$}}
\psfrag{26}{\tiny{$26$}}
\psfrag{27}{\tiny{$27$}}
\psfrag{28}{\tiny{$28$}}
\psfrag{29}{\tiny{$29$}}
\psfrag{30}{\tiny{$30$}}
\psfrag{31}{\tiny{$31$}}
\psfrag{32}{\tiny{$32$}}
\psfrag{33}{\tiny{$33$}}
\psfrag{34}{\tiny{$34$}}
\psfrag{35}{\tiny{$35$}}
\psfrag{36}{\tiny{$36$}}
\psfrag{37}{\tiny{$37$}}
\psfrag{38}{\tiny{$38$}}
\psfrag{39}{\tiny{$39$}}
\psfrag{40}{\tiny{$40$}}
\psfrag{41}{\tiny{$41$}}
\psfrag{42}{\tiny{$42$}}
\centerline{\epsfig{file=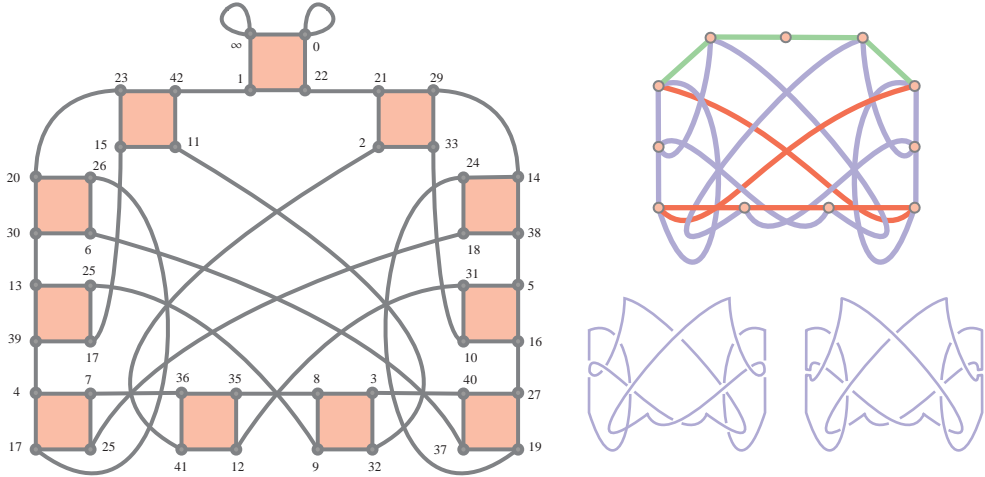}}
\caption{A coset diagram for the action of 
$\Delta(2,4,22)$ 
on $\F_{43} \cup \set{\infty}$ given by $z \mapsto 21z/22$ and $y:  z \mapsto (2z-1)/2z$, 
its companion graph, and two partitions of the subgraph $\mathcal{G}$ into circuits.  
This yields a $4$-januarial of genus $5$ and general type $((4,3),(2,2))$.}   
\label{43_associate}
\end{figure}

Letting $y$ be the transformation $z \mapsto (2z-1)/2z$, we can taken $x$ as $z \mapsto 21/z$ 
and once more get the product $xy$ as the (parabolic) transformation $z \mapsto z+1$, which fixes 
$\infty$ and induces a $43$-cycle on the remaining points. 
The generator $y$ induces a permutation with eleven $4$-cycles and no fixed points, 
while the generator $x$ fixes two points (namely $8$ and $35$) and induces 21 transpositions 
on the remaining points.

 We have not drawn the resulting coset graph, but note that it is reflexible, 
via the transformation $t: z \mapsto 22/z$.  
Its associate, given by the triple $(xt,y,xty)$, shown alongside its companion graph in Figure~\ref{43_associate}, produces a 4-januarial, since 
$xty$ is the transformation $z \mapsto (z+22)/z$, which has two cycles of length $22$. 

The genus is $(21-11)/2 = 5$ by Lemma~\ref{genus}.  
We can use Lemma~\ref{finding h_i} to find $h_1$ and $h_2$.  The partition arising from  
the attaching map $\rho_2$ (following the green and blue edges)  comprises $h_1 =4$ circuits, 
while the partition for $\rho_1$  (following the red and blue edges) comprises $h_2=2$ circuits.  
Both are indicated in the figure.  
By Lemma~\ref{finding g_i}, we have $2 - 2g_1 = 8 -17+4+1$ and $2 - 2g_2 = 11 -16+2+1$, 
and so $g_1 =3$ and $g_2= 2$.


Finally in this section, we consider $6$-januarials.  

In this case, Corollary~\ref{necessary conditions} requires $p \equiv 13$, $19$ or $31$  mod $42$.  
Figure~\ref{PSL31} shows a coset graph for $\PGL(2,31)$.  
Its associate, which yields a januarial $J$, is shown in Figure~\ref{PSL31_associate}  
together with the companion graph.   
By Lemma~\ref{genus},  the genus of $J$ is $(15 - 7)/2 =4$.  
Lemma~\ref{finding h_i} gives $h_1$ and $h_2$:  as shown in the figure, 
we find that $h_1 =1$ circuits comprise the partition of $\mathcal{G}$ arising from 
the attaching map $\rho_2$ (following the green and blue edges), 
and $h_2 =4$ comprise the partition arising from $\rho_1$ (following the red and blue edges).  
Hence by Lemma~\ref{finding g_i}, we find that $2-2g_1=5-13+1+1$ and $2-2g_2 =7-12+4+1$, 
and therefore $g_1=4$ and $g_2=1$.

\begin{figure}[ht]
\psfrag{t}{$t: z \mapsto 1/(3z)$}
\psfrag{i}{\tiny{$\infty$}}
\psfrag{0}{\tiny{$0$}}
\psfrag{1}{\tiny{$1$}}
\psfrag{2}{\tiny{$2$}}
\psfrag{3}{\tiny{$3$}}
\psfrag{4}{\tiny{$4$}}
\psfrag{5}{\tiny{$5$}}
\psfrag{6}{\tiny{$6$}}
\psfrag{7}{\tiny{$7$}}
\psfrag{8}{\tiny{$8$}}
\psfrag{9}{\tiny{$9$}}
\psfrag{10}{\tiny{$10$}}
\psfrag{11}{\tiny{$11$}}
\psfrag{12}{\tiny{$12$}}
\psfrag{13}{\tiny{$13$}}
\psfrag{14}{\tiny{$14$}}
\psfrag{15}{\tiny{$15$}}
\psfrag{16}{\tiny{$16$}}
\psfrag{17}{\tiny{$17$}}
\psfrag{18}{\tiny{$18$}}
\psfrag{19}{\tiny{$19$}}
\psfrag{20}{\tiny{$20$}}
\psfrag{21}{\tiny{$21$}}
\psfrag{22}{\tiny{$22$}}
\psfrag{23}{\tiny{$23$}}
\psfrag{24}{\tiny{$24$}}
\psfrag{25}{\tiny{$25$}}
\psfrag{26}{\tiny{$26$}}
\psfrag{27}{\tiny{$27$}}
\psfrag{28}{\tiny{$28$}}
\psfrag{29}{\tiny{$29$}}
\psfrag{30}{\tiny{$30$}}
\centerline{\epsfig{file=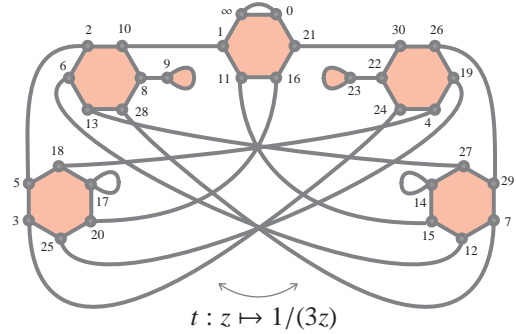}}
\caption{A coset graph for the action of 
$\Delta(2,6,31)$  
on $\F_{31} \cup \set{\infty}$ 
given by $x: z \mapsto 10/z$ and $y:  z \mapsto (z+10)/z$.} 
\label{PSL31}
\end{figure}

\begin{figure}[ht]
\psfrag{i}{\tiny{$\infty$}}
\psfrag{0}{\tiny{$0$}}
\psfrag{1}{\tiny{$1$}}
\psfrag{2}{\tiny{$2$}}
\psfrag{3}{\tiny{$3$}}
\psfrag{4}{\tiny{$4$}}
\psfrag{5}{\tiny{$5$}}
\psfrag{6}{\tiny{$6$}}
\psfrag{7}{\tiny{$7$}}
\psfrag{8}{\tiny{$8$}}
\psfrag{9}{\tiny{$9$}}
\psfrag{10}{\tiny{$10$}}
\psfrag{11}{\tiny{$11$}}
\psfrag{12}{\tiny{$12$}}
\psfrag{13}{\tiny{$13$}}
\psfrag{14}{\tiny{$14$}}
\psfrag{15}{\tiny{$15$}}
\psfrag{16}{\tiny{$16$}}
\psfrag{17}{\tiny{$17$}}
\psfrag{18}{\tiny{$18$}}
\psfrag{19}{\tiny{$19$}}
\psfrag{20}{\tiny{$20$}}
\psfrag{21}{\tiny{$21$}}
\psfrag{22}{\tiny{$22$}}
\psfrag{23}{\tiny{$23$}}
\psfrag{24}{\tiny{$24$}}
\psfrag{25}{\tiny{$25$}}
\psfrag{26}{\tiny{$26$}}
\psfrag{27}{\tiny{$27$}}
\psfrag{28}{\tiny{$28$}}
\psfrag{29}{\tiny{$29$}}
\psfrag{30}{\tiny{$30$}}
\centerline{\epsfig{file=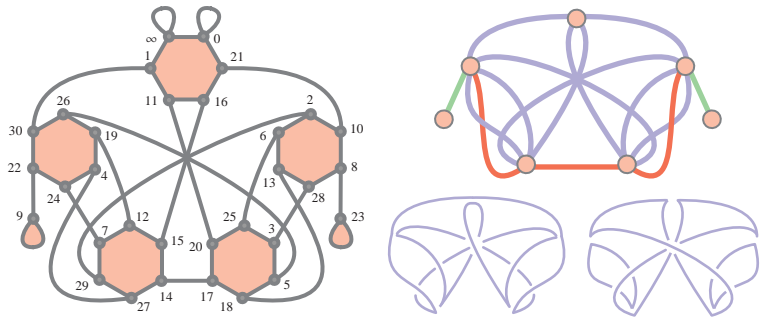}}
\caption{The associate of Figure~\ref{PSL31}, giving an action of 
$\Delta(2,6,16)$ 
on $\F_{31} \cup \set{\infty}$ via $x:   z \mapsto z/30$ and $y:  z \mapsto (z+10)/z$, 
together with its companion graph, and two partitions of the subgraph $\mathcal{G}$ into circuits. 
The resulting $6$-januarial is of genus $4$ and general type $((1,4),(4,1))$.} 
\label{PSL31_associate}
\end{figure}

\bs

\section{Afterword} \label{afterword}

\subsection{Higman's portrait}

Higman delivered the  lectures on which this account is based in the \emph{Higman Room} 
of the Mathematical Institute at Oxford University.   As he spoke, he could see an image of 
himself looking on, from his 1984 portrait by Norman Blamey,  which is reproduced below.  
\begin{figure}[ht] 
${}$\\[+18pt] 
\centerline{\epsfig{file=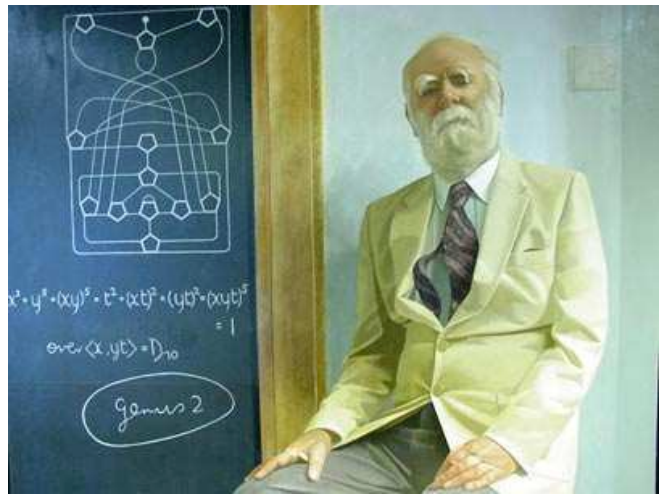,height=65mm}}
\caption{Norman Blamey's 1984 portrait  of Graham Higman} \label{Higman portrait} 
\end{figure} 
 
The portrait shows Higman beside a coset graph for the action of the group $\PSL(2,11)$ on
the cosets of a dihedral subgroup of order $10$ and index $66$.  Equivalently, it gives the 
natural action of $\PSL(2,11)$ on the 66 unordered pairs of points on the projective 
line over a field of order $11$.  The two generators $x$ and $y$ satisfy the relations 
$x^2 = y^5 = (xy)^5 = 1$, but also the diagram is reflexible about a vertical axis of symmetry, 
and the reflection is achievable by conjugation by an involution $t$ in the same group.  

In fact $x$ and $yt$ may be taken as involutory generators of the stabilizer of the 
pair $\set{0,\infty}$, such as  $z \mapsto -1/z$  and  $z \mapsto 2/z$, and $t$ as the 
transformation $z \mapsto (z+1)/(z-1)$.  These choices make $y$ the transformation 
$z \mapsto (z+2)/(2-z)$.   The three generators $x$, $y$ and $t$ then satisfy the 
relations written on the blackboard in the portrait, namely 
$$
 x^2 \ = \  y^5 \ = \  (xy)^5  \ =  \ t^2  \ = \  (xt)^2 \  = \  (yt)^2 \ = \  (xyt)^5 \  = \ 1, 
 $$
 which are the defining relations for the group $G^{5,5,5}$ in the notation of 
 Coxeter \cite{Cox-Gmnp}.  
 Hence in particular, $G^{5,5,5}$ is isomorphic to $\PSL(2,11)$. 
 
The diagram does not give a januarial, but rather a $13$-face map.   
The associated  surface has genus $2$, since there are $13$ pentagons corresponding to the 
$5$-cycles of $\langle y \rangle$, and $28$ edges between distinct pairs of such pentagons  
(from transpositions of $x$), and $13$ faces coming from the 
$5$-cycles of $\langle xy \rangle$, giving Euler characteristic $13-28+13=-2$.   
The isomorphism with $G^{5,5,5}$ also makes $\PSL(2,11)$ the automorphism group of 
a {\em regular map} of type $\{5,5\}_5$ on a non-orientable surface of Euler characteristic $-33$ 
(see \cite{CoxMos}), and hence also the automorphism group of a regular $3$-polytope 
of type $[5,5]$.

\subsection{Other sources of januarials}

Many januarials can also be constructed from groups 
other than $\PSL(2,q)$ and $\PGL(2,q)$.  
For example, the alternating group $\textup{Alt}(16)$ is generated by elements
$\,x = (2, 4)(3, 7)(6,$ $10)(8, 16)(9, 13)(11, 14)\,$ and 
$\,y = (1, 2, 3)(4, 5, 6)(7, 8, 9)$ $(10, 11, 12)(13, 14, 15), \, $
with product
$\, xy = (1, 2, 5, 6, 11, 15, 13, 7)(3, 8, 16, 9, 14, 12, 10, 4),\, $ 
which has two cycles of length $8$. 
The resulting coset diagram is shown in Figure~\ref{A_16}.

\begin{figure}[ht]
\psfrag{1}{\tiny{$1$}}
\psfrag{2}{\tiny{$2$}}
\psfrag{3}{\tiny{$3$}}
\psfrag{4}{\tiny{$4$}}
\psfrag{5}{\tiny{$5$}}
\psfrag{6}{\tiny{$6$}}
\psfrag{7}{\tiny{$7$}}
\psfrag{8}{\tiny{$8$}}
\psfrag{9}{\tiny{$9$}}
\psfrag{10}{\tiny{$10$}}
\psfrag{11}{\tiny{$11$}}
\psfrag{12}{\tiny{$12$}}
\psfrag{13}{\tiny{$13$}}
\psfrag{14}{\tiny{$14$}}
\psfrag{15}{\tiny{$15$}}
\psfrag{16}{\tiny{$16$}}
\centerline{\epsfig{file=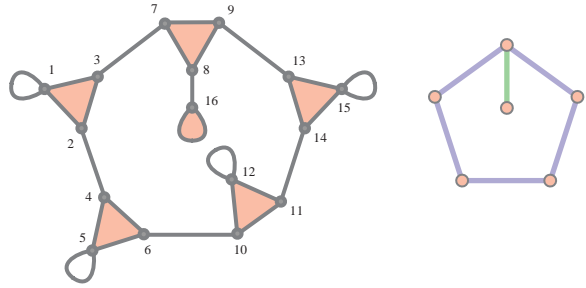}} 
\caption{A coset graph for an action of $\textup{Alt}(16)$ of degree $16$, together with its companion graph.  
This gives a $3$-januarial of genus $0$ and simple type $( 1, 0, 0)$.} 
\label{A_16}
\end{figure}

\medskip 
 


Other examples are obtainable from the groups $\PSL(2,q)$  and 
$\PGL(2,q)$ without taking the approach that we did in Section~\ref{finding} 
which had $xy$ as the transformation $z \mapsto z+1$.  An example is 
given in Figure~\ref{11}.

\begin{figure}[ht]
\psfrag{i}{\tiny{$\infty$}}
\psfrag{0}{\tiny{$0$}}
\psfrag{1}{\tiny{$1$}}
\psfrag{2}{\tiny{$2$}}
\psfrag{3}{\tiny{$3$}}
\psfrag{4}{\tiny{$4$}}
\psfrag{5}{\tiny{$5$}}
\psfrag{6}{\tiny{$6$}}
\psfrag{7}{\tiny{$7$}}
\psfrag{8}{\tiny{$8$}}
\psfrag{9}{\tiny{$9$}}
\psfrag{10}{\tiny{$10$}}
\psfrag{11}{\tiny{$11$}}
\psfrag{12}{\tiny{$12$}}
\psfrag{13}{\tiny{$13$}}
\psfrag{14}{\tiny{$14$}}
\psfrag{15}{\tiny{$15$}}
\psfrag{16}{\tiny{$16$}}
\centerline{\epsfig{file=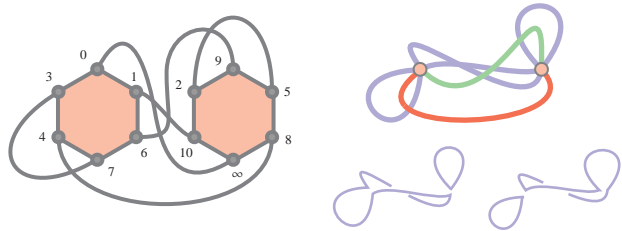}} 
\caption{A coset graph for an action of $\textup{PSL}(2,11)$ 
on $\mathbb{F}_{11} \cup \set{\infty}$, via 
$x : z \mapsto -1/z$ and  $y : z \mapsto  (8z-8)/(z+1)$, 
together with its companion graph, and two partitions 
of the subgraph $\mathcal{G}$.  
This results in a $6$-januarial of genus $1$ and 
general type $( (2,1), (2,1))$.} \label{11}
\end{figure}

\end{document}